\documentclass[10pt,journal,compsoc]{IEEEtran}

\ifCLASSOPTIONcompsoc

  \usepackage[nocompress]{cite}
\else

  \usepackage{cite}
\fi

\usepackage{amsmath}
\usepackage{amssymb}
\usepackage{bbm}
\usepackage{amsthm}
\usepackage{graphicx}

\newtheorem{theorem}{Theorem}

\newtheorem{proposition}{Proposition}
\newtheorem{lemma}{Lemma}
\newtheorem{remark}{Remark}

\theoremstyle{definition}
\newtheorem{definition}{Definition}[section]

\newcommand{\dx}{\, \mathrm d x}
\newcommand{\dy}{\, \mathrm d y}

\newcommand{\vertiii}[1]{{\left\vert\kern-0.25ex\left\vert\kern-0.25ex\left\vert #1 
    \right\vert\kern-0.25ex\right\vert\kern-0.25ex\right\vert}}

\usepackage{color}

\newcommand{\until}[1]{\{1,\dots, #1\}}

\begin{document}
\title{The Laplacian Spectrum of Large Graphs Sampled from Graphons}
\author{Renato~Vizuete,~\IEEEmembership{Graduate Student Member,~IEEE,} 
        Federica~Garin,~\IEEEmembership{Member,~IEEE,}
        and~Paolo~Frasca,~\IEEEmembership{Senior Member,~IEEE}
\thanks{R. Vizuete is with the Universit\'{e} Paris-Saclay, CNRS, CentraleSup\'{e}lec, Laboratoire des signaux et syst\`{e}mes, 91190, Gif-sur-Yvette, France, and also with the Univ.\ Grenoble Alpes, CNRS, Inria, Grenoble INP, GIPSA-lab, F-38000 Grenoble, France. F.~Garin and P.~Frasca are with Univ.\ Grenoble Alpes, CNRS, Inria, Grenoble INP, GIPSA-lab, F-38000 Grenoble, France. Emails: 
{\tt\small renato.vizuete@l2s.centralesupelec.fr},
{\tt\small federica.garin@inria.fr}, \protect\\{\tt\small paolo.frasca@gipsa-lab.fr}
.}

\thanks{This work has been partially supported by the French National Science Foundation through grants LabEx PERSYVAL-Lab (ANR-11-LABX-0025-01) and HANDY (ANR-18-CE40-0010).}}

\IEEEtitleabstractindextext{%
\begin{abstract}
This paper studies the Laplacian spectrum and the average effective resistance of (large) graphs that are sampled from graphons. Broadly speaking, our main finding is that the Laplacian eigenvalues of a large dense graph can be effectively approximated by using the degree function of the corresponding graphon. More specifically, we show how to approximate the distribution of the Laplacian eigenvalues and the average effective resistance (Kirchhoff index) of the graph. For all cases, we provide explicit bounds on the approximation errors and derive the asymptotic rates at which the errors go to zero when the number of nodes goes to infinity.
Our main results are proved under the conditions that the graphon is piecewise Lipschitz and bounded away from zero. 
\end{abstract}

\begin{IEEEkeywords}
Graphons, Laplacian matrix, average effective resistance, Kirchhoff index, large networks.
\end{IEEEkeywords}}
\maketitle

\IEEEdisplaynontitleabstractindextext
\IEEEpeerreviewmaketitle
\IEEEraisesectionheading{\section{Introduction}\label{sec:introduction}}

\IEEEPARstart{T}{he} study of large networks has been a focus of attention in recent years due to the increasing relevance of large networks in multiple fields, from computer science and engineering to biology, economics and sociology. 
Large networks require specific methods not only because of their size but also because their topologies are often known with large uncertainties and can dynamically evolve with time. A prominent tool to approach large networks is the concept of \textit{graphon}, developed in \cite{lovasz2006limits,lovasz2012large,janson2013graphons} more than a decade ago. Graphons are infinite-dimensional representations of ``families'' of graphs and limit objects of convergent graph sequences. Their handy mathematical properties are allowing for a rapidly increasing number of applications in multiple fields, including game theory \cite{parise2019graphon,gao2020lqg}, signal processing~\cite{ruiz2019graphon,ruiz2020graphon}, control theory \cite{gao2019optimal,gao2019graphon}, and the study of diffusion processes~\cite{petit2019random} and epidemics~\cite{gao2019spectral,vizuete2019graphon} on graphs. These applications are demonstrating that graphons can also be a versatile tool to study dynamics on large networks. 

Since the concept of graphon is inherently related to the adjacency matrix of graphs, its applications have essentially focused on cases when the adjacency matrix is the object of study. However, this can be limiting for some applications, because many network-based dynamics are instead better described by using the Laplacian matrix of the graph~\cite{bullo2019network}.
Other applications of the Laplacian matrix include spectral clustering \cite{von2008consistency}, combinatorial optimization \cite{mohar1993eigenvalues}, and signal processing \cite{shuman2013emerging}. 

The spectrum of the Laplacian matrix encodes relevant properties of the network, including its connectivity that can be measured by its spectral gap, that is, the magnitude of its smallest nonzero eigenvalue (if the spectral gap is small, the graph is poorly connected). The Laplacian spectrum also has an important role in the study of graphs by associating an electrical network to them \cite{dorfler2018electrical}.  
Representing graphs as networks of resistors is a classical tool in graph theory with a large range of applications such as the analysis of random walks \cite{doyle2000random,lovasz1993random}, consensus algorithms \cite{lovisari2013resistance,vizuete2021noise}, and distributed estimation algorithms \cite{barooah2007estimation}. 
In this context, a key graph property is the average effective resistance, which can be written as a function of the eigenvalues of the Laplacian matrix. The average effective resistance, also known as Kirchhoff index, can also be used to evaluate the connectivity of a network: small values imply well-connected networks. 
The average effective resistance can be computed, exactly or approximately, for many specific graph topologies, including toroidal graphs \cite{rossi2015average}, $d$-dimensional grids \cite{AC-PR-WR-RS-PT:96}, and other graphs with geometric properties \cite{lovisari2013resistance}. 

However, closed-form expressions for the eigenvalues are not available beyond few academic examples and therefore are of little help for graphs that represent real networks. Actually, real networks are often very large and their size can make the numerical computation of eigenvalues impractical, even accounting for the the recent developments of fast Laplacian solvers \cite{saad2011numerical,vishnoi2012laplacian,spielman2014nearly,kyng2016approximate}. 
Even worse, the topology of the network may not be fully known or be dynamically evolving, therefore preventing the direct application of numerical methods.

In this paper we demonstrate that, for large dense networks that are well described by graphons, properties of these limit objects can be used to provide useful approximations of the Laplacian properties.
Work in this direction has so far been limited to spectral clustering~\cite{luxburg2005limits,von2008consistency} and random walks on graphons~\cite{petit2019random}: we offer here a careful analysis of the approximation properties of graphons for the Laplacian eigenvalues.

In the case of large dense networks, the Laplacian matrix can be seen as a perturbation of the degree matrix of the graph \cite{zhan2010distributions,hata2017localization}, such that the contribution of the adjacency matrix to the Laplacian spectrum is small. Therefore, the distribution of the eigenvalues of the Laplacian matrix is close to the distribution of the degrees. At the same time, the degree function of a graphon is closely related to the degrees of the sampled graphs \cite{avella2018centrality}. 
Combining these two facts, it becomes natural to use the degree function of a graphon to approximate the Laplacian spectrum and, consequently, the average effective resistance of (large) graphs that are sampled from that graphon. 

Motivated by this informal discussion, the objective of this paper is to use characteristics of graphons for the analysis of the spectrum of the Laplacian matrix of graphs that are sampled from graphons. More precisely, our contribution is showing that the degree function of the graphon can be used to approximate the distribution of the Laplacian eigenvalues (Theorem~\ref{thm:spectrum}) and the average effective resistance (Theorem~\ref{main_theorem}). These results will be proved under the technical assumptions of the graphon being piecewise Lipschitz and bounded away from zero. 

The rest of this paper is structured as follows. Section~2 introduces the necessary preliminaries about graphons and sampled graphs. Section~3 presents our main results regarding both the Laplacian spectrum and the average effective resistance.
Section~4 presents a numerical example for our results, using sequences of networks sampled from a Lipschitz continuous graphon. Finally, conclusions and future work are exposed in Section~5.

\section{Graphs and Graphons}

This section contains the definition of graphons and some related notions and facts that will be needed later.

\subsection{Graphons: basic notations and norms}
We begin by  summarizing some definitions and results from
\cite{lovasz2012large, janson2013graphons,avella2018centrality} about kernels and graphons.
The space of all bounded symmetric measurable functions $W:[0,1]^2\rightarrow \mathbb{R}$ is denoted by $\mathcal{W}$. The elements of this space are called \textit{kernels}, because of their connection with integral operators, illustrated below. The set of all kernels $W \in \mathcal{W}$ such that $0 \leq W \leq 1$ is denoted by $\mathcal{W}_0$ and their elements are called \textit{graphons}, whose name is a contraction of graph-function. The set of all kernels $W \in \mathcal{W}$ such that $-1\leq W \leq 1$ is denoted by $\mathcal{W}_1$.
The degree function of a graphon is defined as:
$$
d(x):=\int_0^1W(x,y) \dy .
$$
We denote by $\delta_W$ the infimum of $d(x)$. 

Every function $W \in \mathcal{W}$ defines an integral operator $T_W : L^2[0,1]\rightarrow L^2[0,1]$ by:
$$
\left(T_Wf\right)(x):=\int_0^1W(x,y)f(y) \dy .
$$
If $W$ is continuous, then $T_W$ is also an operator  $T_W: C[0,1] \to C[0,1] $.

For $1\leq p <\infty$,  the $L^p$ norm of a kernel is
$$
\Vert W \Vert_p:=\left(\int_{[0,1]^2}\vert W(x,y)\vert^p \dx \dy\right)^{\!\! 1/p}
$$
and its \textit{cut norm} is
$$
\Vert W \Vert_\square:=\sup_{S,T\subseteq[0,1]}\left\vert\int_{S\times T}W(x,y) \dx \dy\right\vert .
$$
For $W\in\mathcal{W}_1$, we have the following inequalities between $L^p$ norms and the cut norm \cite[Equation 8.14]{lovasz2012large}:
\begin{equation} \label{eq:norms-graphons}
    \Vert W \Vert_\square \leq \Vert W \Vert_1\leq
\Vert W \Vert_2 \leq \Vert W \Vert_1^{1/2} \leq 1.
\end{equation}
By considering the operator $T_W$ associated to a kernel $W \in \mathcal W$, we can define the operator norm:
$$
\vertiii{T_W}:=\sup_{\substack{f \in L^2[0,1] \\ \Vert f \Vert_{2}=1 }}\Vert T_Wf\Vert_2 .
$$
For the elements of $\mathcal{W}_1$, the cut and operator norms are related by \cite[Equation 4.4 and Lemma E.6]{janson2013graphons}:
\begin{equation} \label{eq:norms-operators}
\Vert W \Vert_\square\leq\vertiii{T_W}\leq\sqrt{8}\Vert W \Vert_\square^{1/2}.    
\end{equation}

\subsection{Sampled Graphs} \label{sec:sampling}

A graphon  $W$ can be used to generate a random graph with  $N$ vertices, by using the following sampling method in two steps:

\textit{1. Complete Weighted Graph $\bar G_N$}: 
let $X=(X_1, \dots, X_N)$ be a sequence of independent random variables uniformly distributed on the interval $[0,1]$. We generate the complete weighted graph $\bar G_N$ with $N$ vertices, whose adjacency matrix is defined as: $\bar A_N(i,j)=W(X_{(i)},X_{(j)})$ for all $i,j$ in $\until{N}$, where $X_{(i)}$ is the $i$-th order statistic of the samples $X_1, \dots, X_N$. 

\textit{2. Simple Graph $G_N$}: from $\bar G_N$, we generate the simple graph $G_N$ with $N$ vertices by connecting each pair of distinct vertices $i\neq j$ with probability $\bar A_N(i,j)$ independently of the other edges.

The {\em degrees} of the vertices of $\bar G_N$ are denoted by $\bar d_i$ 
(i.e., $\bar d_i$ is the $i$th row-sum of $\bar A_N$)
and the normalized degrees by $\bar \delta_i=\bar d_i/N$.
We introduce also the diagonal degree matrix $\bar D_N=\mathrm{diag}[\bar d_1,\cdots,\bar d_N]$ and the Laplacian matrix $\bar L_N=\bar D_N-\bar A_N$. We denote the eigenvalues of $\bar L_N$ as $\bar \lambda_i\leq\cdots\leq\bar \lambda_N$ and its normalized eigenvalues as $\bar \mu_i=\bar \lambda_i/N$.

Similarly, we denote the degrees of $G_N$ by $d_i$ 
(i.e., $d_i$ is the $i$th row-sum of $A_N$)
and the normalized versions by $\delta_i=d_i/N$. The degree matrix is defined as $D_N=\mathrm{diag}[d_1,\cdots,d_N]$ and the Laplacian matrix as $L_N=D_N-A_N$. The eigenvalues of the Laplacian matrix are denoted by $\lambda_1\leq\cdots\leq\lambda_N$ and the normalized eigenvalues as $\mu_i=\lambda_i/N$.
Notice that $\bar A_N$ is the expectation of $A_N$ given $X$,
and hence $\bar d_i$ is the expectation of $d_i$ given $X$.

When needed, we will also use 
$\bar d_{(1)}\le \dots \le \bar d_{(N)}$
to denote degrees $\bar d_1, \dots, \bar d_N$ re-arranged in non-decreasing order, 
and similarly we will define  $\bar \delta_{(i)}$'s,
$d_{(i)}$'s and $\delta_{(i)}$'s with a non-decreasing reordering of the corresponding (normalized) degrees.

By considering a uniform partition of $[0,1]$ into the intervals $B_i^N$, where $B_i^N=[(i-1)/N,i/N)$ for $i=1,\ldots, N-1$ and $B_N^N=[(N-1)/N,1]$, we define the following step functions concerning degrees:
$$
 d_N(x)=\sum_{i=1}^N \delta_i\mathbbm{1}_{B_i^N}(x), \qquad
 \tilde d_N(x)=\sum_{i=1}^N \delta_{(i)} \mathbbm{1}_{B_i^N}(x),
$$
and the following step functions concerning Laplacian eigenvalues:
$$
\mu_N(x)=\sum_{i=1}^N \mu_i\mathbbm{1}_{B_i^N}(x),  \qquad
\mu_N^\pi(x)=\sum_{i=1}^N \mu_{\pi(i)}\mathbbm{1}_{B_i^N}(x) ,
$$
where $\mathbbm{1}_A(x)$ is the indicator function and $\pi \in S_N$, i.e., $\pi$ is a permutation of $1, \dots, N$.

\subsection{Step Graphons Associated with Sampled Graphs} 

Given a (possibly weighted) graph $G$ with $N$ vertices and with weighted adjacency matrix whose entries are $a_{ij} \in [0,1]$, the step graphon $W_G$ associated with $G$ is 
defined as
$$
 W_G(x,y):=\sum_{i=1}^N\sum_{j=1}^N a_{ij}\mathbbm{1}_{B_i^N}(x)\mathbbm{1}_{B_j^N}(y) 
$$
and the corresponding operator is
$$
(T_{W_G}f)(x):=\sum_{j=1}^N a_{ij}\int_{B_j^N}f(y) \dy \;\;\text{for any} \; x\in B_i^N .
$$
For a step graphon $W_G$ we have \cite[Equation 8.15]{lovasz2012large}:
\begin{equation} \label{eq:norms-stepgraphon}
\Vert  W_G \Vert_{1}\leq\sqrt{2N}\Vert  W_G \Vert_\square .    
\end{equation}

To prove our main results, we will also need the following lemma,
concerning the Frobenius norm $\Vert A \Vert_F$ of the adjacency matrix of a graph and the operator norm of the associted step graphon. 
\begin{lemma}\label{lem_Frobenius}
Let $W$ be a step graphon associated with a graph $G$ with $N$ vertices. Then:
$$
\Vert A \Vert_F \leq \sqrt[4]{2N^5}\vertiii{T_{ W_G}}^{1/2} .
$$
\end{lemma}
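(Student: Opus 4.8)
The plan is to express the Frobenius norm in terms of the $L^2$ norm of the associated step graphon, and then chain together the norm inequalities already recorded in the excerpt in order to climb from the $L^2$ norm up to the operator norm.

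First I would compute $\Vert W_G\Vert_2$ directly from the definition. Since $W_G$ equals the constant $a_{ij}$ on each cell $B_i^N\times B_j^N$, and each cell has area $1/N^2$, we obtain
$$\Vert W_G\Vert_2^2 = \sum_{i,j} a_{ij}^2\,\frac{1}{N^2} = \frac{1}{N^2}\Vert A\Vert_F^2,$$
so that $\Vert A\Vert_F = N\Vert W_G\Vert_2$. It therefore suffices to bound $\Vert W_G\Vert_2$ by $\vertiii{T_{W_G}}^{1/2}$ up to the stated polynomial factor.

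Next I would reduce the $L^2$ norm to the $L^1$ norm and then bring in the cut and operator norms. Because $a_{ij}\in[0,1]$, the step graphon satisfies $0\le W_G\le 1$ pointwise, hence $W_G^2\le W_G$ and $\Vert W_G\Vert_2^2 \le \Vert W_G\Vert_1$; this is exactly the inequality $\Vert W\Vert_2\le\Vert W\Vert_1^{1/2}$ from \eqref{eq:norms-graphons}, which applies since $W_G\in\mathcal{W}_0\subseteq\mathcal{W}_1$. I then apply the step-graphon bound \eqref{eq:norms-stepgraphon}, $\Vert W_G\Vert_1\le\sqrt{2N}\Vert W_G\Vert_\square$, and finally the left inequality of \eqref{eq:norms-operators}, $\Vert W_G\Vert_\square\le\vertiii{T_{W_G}}$. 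Chaining these gives
$$\Vert W_G\Vert_2^2\le\Vert W_G\Vert_1\le\sqrt{2N}\,\Vert W_G\Vert_\square\le\sqrt{2N}\,\vertiii{T_{W_G}}.$$

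Combining with $\Vert A\Vert_F=N\Vert W_G\Vert_2$ then yields $\Vert A\Vert_F^2 = N^2\Vert W_G\Vert_2^2\le N^2\sqrt{2N}\,\vertiii{T_{W_G}}=\sqrt{2N^5}\,\vertiii{T_{W_G}}$, and taking square roots produces the claim. There is no genuine obstacle here: the entire content is selecting the correct chain of previously established inequalities. The only points requiring a little care are the first reduction $\Vert W_G\Vert_2^2\le\Vert W_G\Vert_1$, which crucially exploits $W_G\le 1$ and would fail for a general kernel, and the bookkeeping of the polynomial factors (the $\sqrt{2N}$ from \eqref{eq:norms-stepgraphon} combined with the factor $N$ relating $\Vert A\Vert_F$ to $\Vert W_G\Vert_2$) so as to land precisely on the exponent $5/4$.
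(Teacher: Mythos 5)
Your proposal is correct and follows essentially the same route as the paper's own proof: compute $\Vert A\Vert_F = N\Vert W_G\Vert_2$ from the piecewise-constant structure, then chain $\Vert W_G\Vert_2\le\Vert W_G\Vert_1^{1/2}$ from \eqref{eq:norms-graphons}, the step-graphon bound \eqref{eq:norms-stepgraphon}, and $\Vert W_G\Vert_\square\le\vertiii{T_{W_G}}$ from \eqref{eq:norms-operators}. Your added remark that the first reduction uses $W_G\le 1$ (i.e., $W_G\in\mathcal{W}_0$) makes explicit a hypothesis the paper leaves implicit, but the argument is otherwise identical.
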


\begin{proof}
We consider the $L^2$ norm of the step graphon:
$$
\Vert W_G \Vert_{2}=\left(\int_0^1\int_0^1  W_G^2(x,y) \dx \dy\right)^{1/2} .
$$
We can see that $W_G^2(x,y)=W_G(x,y)W_G(x,y)$ is the product of two step functions with the same partition. Using the property $\mathbbm{1}_{A\cap B}=\mathbbm{1}_A \mathbbm{1}_B$, we obtain:
$$
W_G^2(x,y)=\sum_{i=1}^N\sum_{j=1}^N a^2_{ij} \mathbbm{1}_{B_i^N}(x) \mathbbm{1}_{B_j^N}(y) ,
$$
and hence:
\begin{multline*}
 \Vert W_G \Vert_{2}^2= \int_0^1\int_0^1\sum_{i=1}^N\sum_{j=1}^N a_{ij}^2 \mathbbm{1}_{B_i^N}(x) \mathbbm{1}_{B_j^N}(y) \dx \dy\\
=\sum_{i=1}^N\sum_{j=1}^N \int_{\frac{i-1}{N}}^{\frac{i}{N}}\int_{\frac{j-1}{N}}^{\frac{j}{N}} a_{ij}^2 \dx \dy=\dfrac{1}{N^2} \sum_{i=1}^N\sum_{j=1}^N a_{ij}^2
= \dfrac{\Vert A \Vert_F^2}{N^2} .   
\end{multline*}
This gives $\Vert A \Vert_F=N\Vert W_G \Vert_2$. Finally, using \eqref{eq:norms-graphons}, \eqref{eq:norms-operators}, \eqref{eq:norms-stepgraphon} implies
$
\Vert A \Vert_F\leq N\Vert W_G \Vert_1^{1/2}\leq \sqrt[4]{2N^5} \Vert W_G
\Vert_\square^{1/2}\leq\sqrt[4]{2N^5} \vertiii{T_{W_G}}^{1/2}.
$
\end{proof}

For the graphs $\bar G_N$ and $G_N$ sampled from a graphon $W$
as described in Section~\ref{sec:sampling},
we will denote the corresponding step graphons with the short-hand notations
$\bar W_N := W_{\bar G_N}$ and $W_N := W_{G_N}$.

\subsection{Graphs Sampled from Piecewise Lipschitz Graphons}
We shall restrict our analysis to a class of graphons that is wide enough to be relevant for the applications, but leads to a tractable analysis. We therefore consider the class of piecewise Lipschitz graphons, some properties of which we recall from \cite{avella2018centrality}.
\begin{definition}[Piecewise Lipschitz graphon]
Graphon $W$ is said to be \textit{piecewise Lipschitz} if there exists a constant $L$ and a sequence of non-overlapping intervals $I_k=[\alpha_{k-1},\alpha_k)$ defined by $0=\alpha_0 < \cdots <\alpha_{K+1}=1$, for a finite non-negative integer $K$ such that for any $k, \ell$, any set $I_{k\ell}=I_k \times I_\ell$ and pairs $(x_1,y_1) \textrm{ and } (x_2,y_2)\in I_{k\ell}$ we have that:
$$
\vert W(x_1,y_1)-W(x_2,y_2)\vert \leq L(\vert x_1-x_2 \vert+\vert y_1-y_2 \vert) . 
$$
If $K=0$, then the graphon is said to be \textit{Lipschitz}.
\end{definition}
Notice that when $W$ is a piecewise Lipschitz graphon, the degree function $d(x)$ is piecewise continuous, and hence  $\delta_W$ is its minimum, and not just its infimum.

\begin{definition}[Large enough $N$]\label{largeN}
Given a piecewise Lipschitz graphon $W$ and $\nu<e^{-1}$, $N$ is \textit{large enough} if $N$ satisfies the following conditions:
\begin{subequations}
\begin{equation}
\dfrac{2}{N}<\min_{k\in \{1,\ldots , K+1\}}(\alpha_k-\alpha_{k-1}) , \label{eq_condi1}
\end{equation}
\begin{equation}
\dfrac{1}{N}\log\left(\dfrac{2N}{\nu}\right)+\dfrac{1}{N}(2K+3L)<\max_{x}d_W(x) , \label{eq_condi2}
\end{equation}
\begin{equation}
Ne^{-N/5}<\nu . \label{eq_condi3}
\end{equation}
\end{subequations}
\end{definition}

The following  result is given in \cite{avella2018centrality} as Theorems~1 and~2.
\begin{lemma} \label{lemma1}
For a piecewise Lipschitz graphon $W$ and $N$ large enough, with probability at least $1-\nu$:
\begin{equation}
\vertiii{T_{\bar {W}_N}-T_W}\leq 2\sqrt{(L^2-K^2)b_N^2+Kb_N}=:\vartheta(N) , \label{bound_operator_H}
\end{equation}
\begin{equation}\label{bound_degree_H}
\Vert \bar d_N(x)-d(x) \Vert_2\leq \vartheta(N) ,
\end{equation}
and with probability at least $1-2\nu$:
\begin{align}
\vertiii{T_{W_N}-T_W}&\leq \sqrt{\dfrac{4\log (2N/\nu)}{N}}+\vartheta(N)=:\phi(N) , \label{bound_operator_G}
\end{align}
\begin{equation}\label{bound_degree_G}
\Vert d_N(x)-d(x) \Vert_2\leq \phi(N) ,
\end{equation}
where $b_N:=\frac{1}{N}+\sqrt{\frac{8\log(N/\nu)}{N+1}}$.
\end{lemma}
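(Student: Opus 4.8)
The four estimates split into two groups: the deterministic-kernel bounds \eqref{bound_operator_H}--\eqref{bound_degree_H} for the weighted sample $\bar W_N$, and the bounds \eqref{bound_operator_G}--\eqref{bound_degree_G} for the Bernoulli sample $W_N$, which I would derive from the former by superposing the fluctuation coming from the edge-sampling step. The plan is therefore to first control $T_{\bar W_N}-T_W$ and $\bar d_N-d$, and then to add the independent Bernoulli noise through a triangle inequality, the doubling of the failure probability from $1-\nu$ to $1-2\nu$ reflecting the two independent sources of randomness.

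Step 1 (the weighted kernel). On the block $B_i^N\times B_j^N$ the kernel $\bar W_N$ is the constant $W(X_{(i)},X_{(j)})$, so I would write the pointwise difference as
\[
\bar W_N(x,y)-W(x,y)=W(X_{(i)},X_{(j)})-W(x,y),\qquad x\in B_i^N,\ y\in B_j^N,
\]
and estimate it blockwise. On every block whose closure stays inside a single Lipschitz rectangle $I_k\times I_\ell$, the piecewise-Lipschitz property bounds this by $L(|X_{(i)}-x|+|X_{(j)}-y|)$, and $|X_{(i)}-x|\le|X_{(i)}-i/N|+1/N$. The probabilistic input is a concentration bound for the uniform order statistics (of Dvoretzky--Kiefer--Wolfowitz type, refined by a Bernstein/union-bound estimate on the Beta-distributed $X_{(i)}$): with probability at least $1-\nu$ one has $\max_i|X_{(i)}-i/N|\le b_N$, which is exactly where $b_N=\tfrac1N+\sqrt{8\log(N/\nu)/(N+1)}$ enters. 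The at most $K$ ``bad'' blocks straddling a breakpoint $\alpha_k$ escape the Lipschitz control, but condition \eqref{eq_condi1} forces each breakpoint to meet only $O(1)$ blocks, so their total contribution sits on a set of measure $O(Kb_N)$ where the integrand is only $O(1)$. Integrating the square of the difference then produces a Hilbert--Schmidt bound of the form $\sqrt{(L^2-K^2)b_N^2+Kb_N}$, and passing from the Hilbert--Schmidt to the operator norm costs only a constant, yielding \eqref{bound_operator_H}. The degree estimate \eqref{bound_degree_H} is the row-sum version of the same computation: $\bar\delta_i$ is a Riemann-sum approximation of $d(X_{(i)})=\int_0^1 W(X_{(i)},y)\dy$, and $d(X_{(i)})-d(x)$ is controlled by the same Lipschitz-plus-order-statistics argument applied to the (piecewise Lipschitz) degree function.

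Step 2 (the Bernoulli sample). Conditionally on $X$, the matrix $A_N-\bar A_N$ has independent, mean-zero, $[-1,1]$-valued entries, so $T_{W_N}-T_{\bar W_N}$ is the step operator of a random symmetric kernel with independent centered blocks. I would bound its operator norm by a spectral-norm concentration inequality for symmetric random matrices with independent entries (matrix Bernstein, or the standard $\|A_N-\bar A_N\|=O(\sqrt N)$ bound for inhomogeneous random-graph matrices); after the normalization by $N$ built into $T_{W_N}$ this gives a fluctuation of order $\sqrt{\log(2N/\nu)/N}$ with probability at least $1-\nu$, precisely the extra term in $\phi(N)=\sqrt{4\log(2N/\nu)/N}+\vartheta(N)$. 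A triangle inequality with Step 1 then delivers \eqref{bound_operator_G}. For \eqref{bound_degree_G}, each $d_i-\bar d_i=\sum_{j}\bigl(A_N(i,j)-\bar A_N(i,j)\bigr)$ is a sum of $N$ independent centered Bernoulli variables, so Hoeffding's inequality plus a union bound over $i$ controls $\max_i|\delta_i-\bar\delta_i|$ by $\sqrt{\log(2N/\nu)/N}$; combined with \eqref{bound_degree_H} this gives \eqref{bound_degree_G}.

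The main obstacle is Step 1, and specifically the assembly of the constants in $\vartheta(N)$. Obtaining the single expression $\sqrt{(L^2-K^2)b_N^2+Kb_N}$, rather than a cruder sum, requires cleanly separating the $K$ boundary blocks (where only the $O(1)$-height, $O(b_N)$-measure estimate is available) from the smooth interior (where the $Lb_N$ Lipschitz estimate applies) and then recombining their squared contributions under the integral. Tracking the constant through the order-statistics concentration step and checking that conditions \eqref{eq_condi1}--\eqref{eq_condi3} are exactly what keeps the bad blocks disjoint and makes $b_N$ a valid uniform deviation bound is the delicate bookkeeping; by contrast, the random-matrix concentration underpinning Step 2 is comparatively standard.
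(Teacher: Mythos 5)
The paper offers no proof of this lemma for your attempt to be compared against: immediately before the statement it says ``The following result is given in \cite{avella2018centrality} as Theorems~1 and~2,'' so the result is imported from that reference, not proved here. Judged against the argument in that reference, your sketch reconstructs the right structure: uniform concentration of the order statistics ($\max_i\vert X_{(i)}-i/N\vert\le b_N$ with probability at least $1-\nu$, which is where $b_N$ enters), blockwise piecewise-Lipschitz estimates on blocks interior to a rectangle $I_k\times I_\ell$ together with a measure-$O(Kb_N)$ estimate on the strips straddling breakpoints, passage from the $L^2$ kernel estimate to the operator norm, and then, for the Bernoulli sample, a Chung--Radcliffe-type concentration bound on $\Vert A_N-\bar A_N\Vert_2$ (the same bound from \cite{chung2011spectra} that this paper reuses in the proof of Lemma~\ref{concentration}) superposed by the triangle inequality, with the failure probabilities $1-\nu$ and $1-2\nu$ accounted for exactly as you describe. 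Two details in your sketch are slightly off, though neither is fatal. First, nothing is lost in passing from the Hilbert--Schmidt norm to the operator norm: $\vertiii{T_{\bar W_N}-T_W}\le\Vert\bar W_N-W\Vert_2$ holds outright, and the degree bound \eqref{bound_degree_H} follows from the very same $L^2$ kernel estimate by Jensen's inequality, since $\bar d_N(x)-d(x)=\int_0^1\bigl(\bar W_N(x,y)-W(x,y)\bigr)\dy$; this explains why \eqref{bound_operator_H} and \eqref{bound_degree_H} share the single bound $\vartheta(N)$. Second, condition \eqref{eq_condi1} does not confine a breakpoint to $O(1)$ blocks --- a breakpoint contaminates all $O(Nb_N)$ blocks within distance $b_N$ of it --- its role is rather to guarantee that each piece $I_k$ is resolved at scale $1/N$; the contaminated set still has measure $O(b_N)$ per breakpoint, which is all the bound requires.
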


From Lemma~\ref{lemma1} we can see that the constant $\nu$ determines the probability with which the results hold, such that if we want a higher probability, the value of $N$ will increase to satisfy the \textit{large enough} condition. The constant $\nu$ will appear in most of the results of the paper.

To obtain the main results of our paper (see Section~\ref{sec:main}) we will consider graphons which are piecewise Lipschitz. Moreover, when needed, we will consider graphons which are bounded away from zero, i.e., whose infimum (denoted by $\eta_W$, and which is actually a minimum under the piecewise Lipschitz assumption) is strictly positive. Graphons which are bounded away from zero are also known as graphons having `minimal degree' \cite{avella2018centrality}, since the assumption $W(x,y) \ge \eta_W>0$ for all $x,y$ has the following implications about the degrees, both of the graphon itself and of the graphs sampled from the graphon:  $\delta_W \ge \eta_W >0$ and $\bar \delta_i \ge \eta_W$ for all $i = 1, \dots, N$.

\subsection{Laplacian Operator of a Graphon}

The Laplacian matrix of a graph is defined as the difference between the degree matrix and the adjacency matrix. In analogy with this definition, we can define a Laplacian operator for graphons $\mathcal{L}_W:L^2[0,1]\to L^2[0,1]$ as:
\begin{equation}\label{eq:laplacian-operator}
(\mathcal{L}_Wf)(x):=d(x)f(x)- (T_W f)(x).   
\end{equation}
If the graphon is continuous, $\mathcal{L}_W$ is also an operator in the space of continuous functions $\mathcal{L}_W:C[0,1]\to C[0,1]$, see \cite{von2008consistency}. 
The spectrum of this operator is composed by an essential spectrum located in the range of the degree function $d(x)$ and a finite number of isolated eigenvalues $\kappa_i$, which can only have accumulation points in the boundaries of the essential spectrum. The isolated eigenvalues are contained in the interval $[0,1]$ 
and $\kappa_1=0$ is always an eigenvalue with a constant eigenfunction associated $\psi_1(x)=k$.

\section{Main results on Laplacian spectrum} \label{sec:main}
This section contains our main results about the Laplacian spectrum, which regard the whole distribution of the eigenvalues (Section~\ref{sec:spectrum-dsitribution}), the spectral gap (Section~\ref{sect:gap}) and the average effective resistance (Section~\ref{sec:resistance}). We conclude the section with some remarks about an easy extension of our results to deterministically sampled graphs (Section~\ref{sect:deterministic}).

\subsection{Distribution of eigenvalues} \label{sec:spectrum-dsitribution}
For a large dense network, the distribution of the eigenvalues of the Laplacian matrix is close to the distribution of the degrees of the vertices \cite{hata2017localization}. Using results of perturbation theory, \cite{zhan2010distributions} derived a bound for the relative error in the estimation of the eigenvalues of the Laplacian matrix using the degrees of the network for simple graphs: 
\begin{equation} \label{eq:eigs-degrees-densegraphs}
  \dfrac{\Vert  \lambda_{G}- \tilde{d}_G \Vert_2}{\Vert  \tilde{d}_G \Vert_2}\leq\sqrt{\dfrac{N}{\Vert  \tilde{d}_G \Vert_1}} ,  
\end{equation}
where $\lambda_G$ is a vector with the Laplacian eigenvalues arranged in non-decreasing order and $\tilde d_G$ is a vector with the degrees of the network arranged in non-decreasing order.
In particular, for a sequence of graphs where at least a constant fraction of vertices have a degree growing linearly with $N$, the right-hand side of \eqref{eq:eigs-degrees-densegraphs} decays to zero as $O(1/\sqrt N)$.

Graphs $G_N$ sampled from a graphon as described in Section~\ref{sec:sampling} are dense graphs, and it is natural to look for an analogous of \eqref{eq:eigs-degrees-densegraphs}, so as to show that the eigenvalues of the Laplacian of $G_N$ are mostly determined by the {\em reordered degree function of the same graph}, with an error bounded by a quantity only depending on $N$ and on the graphon (see Proposition~\ref{prop_Distribution}).

\begin{proposition}\label{prop_Distribution}
For a piecewise Lipschitz graphon $W$ and $N$ large enough, with probability at least $1-2\nu$:
$$
\Vert \mu_N(x)-\tilde d_N(x)\Vert_{2}\leq \sqrt[4]{\dfrac{2}{N}}\sqrt{\vertiii{T_W}+\phi(N)},
$$
with $\phi(N)$ as in Lemma~\ref{lemma1}.
\end{proposition}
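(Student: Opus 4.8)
The plan is to pass from the step functions to finite-dimensional vectors, apply a matrix perturbation inequality, and then convert the resulting Frobenius norm into an operator norm that the earlier lemmas control. First I would rewrite the left-hand side in terms of the ordered eigenvalue and degree vectors. Since the blocks $B_i^N$ are disjoint and each has length $1/N$, and since $\mu_i-\delta_{(i)}=(\lambda_i-d_{(i)})/N$, a direct computation gives
$$
\Vert \mu_N(x)-\tilde d_N(x)\Vert_2^2=\frac1N\sum_{i=1}^N(\mu_i-\delta_{(i)})^2=\frac{1}{N^3}\sum_{i=1}^N(\lambda_i-d_{(i)})^2 .
$$
Thus it suffices to bound $\sum_{i=1}^N(\lambda_i-d_{(i)})^2$, the squared Euclidean distance between the non-decreasingly ordered Laplacian eigenvalues of $L_N$ and the non-decreasingly ordered degrees, which are exactly the eigenvalues of $D_N$.

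The heart of the argument is the observation that this quantity is controlled by the Hoffman--Wielandt inequality, which here plays the role that \eqref{eq:eigs-degrees-densegraphs} plays in the general dense-graph setting. Writing $L_N=D_N-A_N$, I would apply Hoffman--Wielandt to the two symmetric matrices $L_N$ and $D_N$. Since for Hermitian matrices the optimal matching of the two spectra is the monotone one, and both $\lambda_i$ and $d_{(i)}$ are already arranged in non-decreasing order, one obtains
$$
\sum_{i=1}^N(\lambda_i-d_{(i)})^2\le \Vert L_N-D_N\Vert_F^2=\Vert A_N\Vert_F^2 .
$$
Recognizing that the required estimate is exactly a Hoffman--Wielandt bound, and not merely a Weyl-type interlacing statement, is the main conceptual step; everything before and after it is routine.

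It then remains to bound $\Vert A_N\Vert_F$. Here I would invoke Lemma~\ref{lem_Frobenius} applied to the step graphon $W_N=W_{G_N}$, whose entries lie in $\{0,1\}\subseteq[0,1]$, obtaining $\Vert A_N\Vert_F^2\le \sqrt{2N^5}\,\vertiii{T_{W_N}}$. To replace $\vertiii{T_{W_N}}$ by the graphon quantity $\vertiii{T_W}$ I would use the triangle inequality together with the operator-norm bound \eqref{bound_operator_G} of Lemma~\ref{lemma1}: with probability at least $1-2\nu$ and for $N$ large enough, $\vertiii{T_{W_N}}\le \vertiii{T_W}+\vertiii{T_{W_N}-T_W}\le \vertiii{T_W}+\phi(N)$. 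Chaining these bounds gives
$$
\Vert \mu_N(x)-\tilde d_N(x)\Vert_2^2\le \frac{1}{N^3}\sqrt{2N^5}\,\bigl(\vertiii{T_W}+\phi(N)\bigr)=\sqrt{\tfrac2N}\,\bigl(\vertiii{T_W}+\phi(N)\bigr),
$$
and taking square roots yields the claim. The only probabilistic input is \eqref{bound_operator_G}; every other step is deterministic, so the $1-2\nu$ confidence level is inherited directly from Lemma~\ref{lemma1} with no further union bound.
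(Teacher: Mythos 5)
Your proposal is correct and follows essentially the same route as the paper's own proof: rewriting the step-function norm as $\frac{1}{N^3}\sum_i(\lambda_i-d_{(i)})^2$, applying the Hoffman--Wielandt inequality to $L_N$ and $D_N$ to bound this by $\Vert A_N\Vert_F^2$, and then chaining Lemma~\ref{lem_Frobenius} with the triangle inequality and \eqref{bound_operator_G}. Your explicit remark that the monotone matching is optimal for Hermitian spectra (so the sorted orderings can be compared directly) and your observation that the $1-2\nu$ probability is inherited solely from \eqref{bound_operator_G} are slightly more careful than the paper's wording, but the substance is identical.
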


\begin{proof}
By definition:
\begin{multline*}
\Vert \mu_N(x)-\tilde d_N(x)\Vert_{2}^2=
\int_0^1\sum_{i=1}^N\vert\mu_i-\delta_{(i)}\vert^2 \mathbbm{1}_{B_i^N}(x) \dx\\
=\dfrac{1}{N}\sum_{i=1}^N\vert \mu_i-\delta_{(i)}\vert^2
=\dfrac{1}{N^3}\sum_{i=1}^N\vert \lambda_i-d_{(i)}\vert^2.
\end{multline*}
We can use the Wielandt-Hoffman Theorem \cite{horn2012matrix}, obtaining:
$$
\sum_{i=1}^N\vert\lambda_i-d_{(i)}\vert^2\leq\Vert A_N\Vert^2_F .
$$
and hence
$$
\Vert \mu_N(x)-\tilde d_N(x)\Vert_{2}\leq\dfrac{1}{N^{3/2}}\Vert A_N\Vert_F.
$$
By using Lemma~\ref{lem_Frobenius} we get:
$$
\Vert \mu_N(x)-\tilde d_N(x)\Vert_{2}\leq\dfrac{\sqrt[4]{2N^5}\vertiii{T_{W_N}}^{1/2}}{N^{3/2}}.
$$
Finally we notice that
$\vertiii{T_{W_N}} \le \vertiii{T_{W_N}-T_W} + \vertiii{T_{W}}$
and we use \eqref{bound_operator_G} from Lemma~\ref{lemma1} to obtain the desired result.
\end{proof}

Furthermore, we can approximate the distribution of the normalized Laplacian eigenvalues by using the {\em degree function of the graphon}, as follows.

\begin{proposition}\label{boundLaplacian}
For a piecewise Lipschitz 
graphon $W$ and $N$ large enough, 
with probability at least $1-2\nu$:
$$
\min_{\pi \in S_N} \Vert \mu^\pi_N(x)-d(x)\Vert_2\leq \sqrt[4]{\dfrac{2}{N}}\sqrt{\vertiii{T_W}+\phi(N)}+\phi(N),
$$
with $\phi(N)$ as in Lemma~\ref{lemma1}.
\end{proposition}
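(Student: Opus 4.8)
The plan is to obtain this bound by chaining two estimates already available: Proposition~\ref{prop_Distribution}, which controls the distance between the sorted eigenvalue step function $\mu_N$ and the sorted degree step function $\tilde d_N$, and the bound \eqref{bound_degree_G} of Lemma~\ref{lemma1}, which controls the distance between the (unsorted) degree step function $d_N$ and the graphon degree function $d$. The two estimates live on different orderings of the indices, so the only real work is to produce a single permutation $\pi$ that realigns them; the minimization over $\pi \in S_N$ in the statement is precisely what makes this possible, since it suffices to exhibit one good permutation and bound the minimum from above by its value.

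First I would isolate the combinatorial identity relating the sorted-to-sorted matching to a matching against the unsorted degrees. Let $\sigma \in S_N$ be a permutation that sorts the degrees, so that $\delta_{\sigma(i)} = \delta_{(i)}$ for every $i$. Starting from the definition
$$
\Vert \mu_N - \tilde d_N \Vert_2^2 = \frac{1}{N}\sum_{i=1}^N |\mu_i - \delta_{(i)}|^2 = \frac{1}{N}\sum_{i=1}^N |\mu_i - \delta_{\sigma(i)}|^2,
$$
the index substitution $j = \sigma(i)$ rewrites the sum as $\frac{1}{N}\sum_{j=1}^N |\mu_{\sigma^{-1}(j)} - \delta_j|^2$, which I recognize as $\Vert \mu_N^\pi - d_N \Vert_2^2$ for the specific choice $\pi = \sigma^{-1}$. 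This yields the key identity
$$
\Vert \mu_N^{\sigma^{-1}} - d_N \Vert_2 = \Vert \mu_N - \tilde d_N \Vert_2 .
$$
In words, realigning the permuted eigenvalues against the \emph{unsorted} degree step function by the inverse sorting permutation reproduces exactly the sorted-against-sorted distance that Proposition~\ref{prop_Distribution} bounds.

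The remainder is a triangle inequality applied at the single permutation $\pi = \sigma^{-1}$:
$$
\min_{\pi \in S_N} \Vert \mu_N^\pi - d \Vert_2 \le \Vert \mu_N^{\sigma^{-1}} - d \Vert_2 \le \Vert \mu_N^{\sigma^{-1}} - d_N \Vert_2 + \Vert d_N - d \Vert_2 .
$$
I then bound the first term by the identity above together with Proposition~\ref{prop_Distribution}, and the second term by \eqref{bound_degree_G}, which gives exactly the claimed right-hand side $\sqrt[4]{2/N}\sqrt{\vertiii{T_W}+\phi(N)} + \phi(N)$. The step I would be most careful about is not the (short) permutation bookkeeping but the probability accounting: Proposition~\ref{prop_Distribution} rests on the operator bound \eqref{bound_operator_G}, and \eqref{bound_degree_G} is part of the same statement of Lemma~\ref{lemma1}, so both hold simultaneously on one event of probability at least $1-2\nu$. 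Verifying that the two ingredients share this event, rather than invoking a union bound that would degrade the guarantee to $1-4\nu$, is the only point where the argument could silently go wrong.
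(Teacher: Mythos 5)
Your proof is correct and takes essentially the same route as the paper's: the same triangle inequality through the empirical degree function $d_N$, the same Wielandt--Hoffman/Frobenius machinery (which you access by reusing Proposition~\ref{prop_Distribution} rather than inlining its proof), and the same probability accounting, namely that \eqref{bound_operator_G} and \eqref{bound_degree_G} hold jointly on a single event of probability at least $1-2\nu$ by Lemma~\ref{lemma1}. The only cosmetic difference is the choice of permutation: you exhibit the inverse of the degree-sorting permutation explicitly, while the paper takes the minimizer in Wielandt--Hoffman applied to $\sum_{i}\vert\lambda_{\pi(i)}-d_i\vert^2$ --- these coincide (up to ties), so the two proofs are the same argument packaged differently.
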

\begin{proof}
In addition to the step functions defined in Section~\ref{sec:sampling},
in this proof we will also use:
$$\bar \mu_N^\pi(x)=\sum_{i=1}^N \bar \mu_{\pi(i)}\mathbbm{1}_{B_i^N}(x) .$$

The goal of this proof is to show that, with probability at least $1-2\nu$,
there exists a permutation $\sigma$ such that:
$$
 \Vert \mu^\sigma_N(x)-d(x)\Vert_2\leq \sqrt[4]{\dfrac{2}{N}}\sqrt{\vertiii{T_W}+\phi(N)}+\phi(N).
$$
Notice that a different $\sigma$ might be used for different realizations of the random graph $G_N$.

By applying the triangle inequality in $\Vert \mu^\sigma_N(x)-d(x)\Vert_2$, we get that, for any~$\sigma$:
\begin{equation} \label{eq:proof-prop2}
\Vert \mu^\sigma_N(x)-d(x)\Vert_2 \leq \\
\Vert \mu^\sigma_N(x)- d_N(x)\Vert_2
+\Vert  d_N(x)-d(x) \Vert_2.
\end{equation}
For the first term, we have:
$$
\Vert \mu^\sigma_N(x)- d_N(x)\Vert_2=\dfrac{1}{N^{3/2}}\left(\sum_{i=1}^N\vert \lambda_{\sigma(i)}-  d_i\vert^2\right)^{1/2}.
$$
Then, we apply Wielandt-Hoffman theorem to $A_N = D_N - L_N$, which gives:
\begin{equation*}
\min_{\pi \in S_N} \sum_{i=1}^N\vert \lambda_{\pi(i)}-  d_i\vert^2
\leq \Vert  A_N \Vert_F^2 .
\end{equation*}
We choose $\sigma$ to be the permutation that achieves the above minimum,
so that we get:
$$
\Vert \mu^\sigma_N(x)- d_N(x)\Vert_2 
\le 
\frac{\Vert  A_N \Vert_F}{N^{3/2}}.
$$
For an upper bound on $\Vert  A_N \Vert_F$, we apply Lemma~\ref{lem_Frobenius}, and then we apply
the inequality 
$\vertiii{T_{W_N}} \le \vertiii{T_{W_N}-T_W} + \vertiii{T_{W}}$
and the bound \eqref{bound_operator_G} from Lemma~\ref{lemma1}.
We obtain that with probability at least $1-\nu$ there exists $\sigma$ such that:
\begin{equation}\label{eq_th1_2}
\Vert \mu^\sigma_N(x)- d_N(x)\Vert_2
\leq\sqrt[4]{\dfrac{2}{N}}\sqrt{\vertiii{T_W}+\phi(N)}.
\end{equation}
For the second term in the right-hand side of \eqref{eq:proof-prop2},
we use \eqref{bound_degree_G} from Lemma~\ref{lemma1}. 
Notice that Lemma~\ref{lemma1} ensures that with probability at least $1 -\nu$ both bounds \eqref{bound_operator_G} and \eqref{bound_degree_G} hold true, together;
this ensures that with the same probability both bounds \eqref{eq_th1_2} and \eqref{bound_degree_G} hold true, together.
\end{proof}

Proposition~\ref{boundLaplacian} is key in our analysis because it makes the connection between graphs and graphons: the result ensures that the degree function \emph{of the graphon} provides a good approximation of the eigenvalues \emph{of the graph}.  

The statement of Proposition~\ref{boundLaplacian} involves finding the best re-ordering $\pi$ of the Laplacian eigenvalues, so as to minimize $\Vert \mu^\pi_N(x)-d(x)\Vert_2$: if we want to know the function $\mu_N^\pi(x)$ that satisfies the corresponding upper bounds, it is necessary to evaluate $N!$ possible permutations.
A simpler statement can be obtained by adding a suitable monotonicity assumption
to a graphon bounded away from zero.  In the theorem below we will 
consider a graphon $W$ that is bounded away from zero and is non-decreasing, i.e., such that
$W(x_1,y) \le W(x_2,y)$ when $x_1 \le x_2$. A graphon being non-decreasing implies that its degree function is also non-decreasing, even though the converse is not true.

\begin{theorem}\label{thm:spectrum}
For a piecewise Lipschitz, non-decreasing graphon  with minimum $\eta_W>0$  
and for $N$ large enough, with probability at least $1-3 \nu$:
$$
\Vert \mu_N(x)-d(x)\Vert_2\leq \varphi(N)+\sqrt[4]{\dfrac{2}{N}}\sqrt{\vertiii{T_W}+\vartheta(N)}+\vartheta(N),
$$
with $\vartheta(N)$ as in Lemma~\ref{lemma1} and $\varphi(N)$ as in Lemma~\ref{concentration} below.
\end{theorem}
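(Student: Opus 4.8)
\emph{The plan} is to route the comparison between $\mu_N(x)$ and $d(x)$ through the weighted graph $\bar G_N$, so that the simple-graph bound $\phi(N)$ used in Proposition~\ref{boundLaplacian} is replaced by the weighted-graph bound $\vartheta(N)$, and the discrepancy between $G_N$ and $\bar G_N$ is absorbed into the term $\varphi(N)$. Writing $\bar \mu_N(x)=\sum_{i=1}^N \bar \mu_i \mathbbm{1}_{B_i^N}(x)$ for the step function built from the sorted normalized eigenvalues $\bar\mu_1\le\cdots\le\bar\mu_N$ of $\bar L_N$ (i.e.\ $\bar\mu_N=\bar\mu_N^{\mathrm{id}}$), I would start from the triangle inequality
$$\Vert \mu_N(x)-d(x)\Vert_2 \le \Vert \mu_N(x)-\bar \mu_N(x)\Vert_2 + \Vert \bar \mu_N(x)-d(x)\Vert_2$$
and bound the two terms separately.

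For the second term I would first prove the exact analogue of Proposition~\ref{boundLaplacian} for $\bar G_N$: running the same argument (Wielandt--Hoffman applied to $\bar A_N=\bar D_N-\bar L_N$, Lemma~\ref{lem_Frobenius} applied to $\bar W_N$, and $\vertiii{T_{\bar W_N}}\le \vertiii{T_{\bar W_N}-T_W}+\vertiii{T_W}$), but now invoking the weighted-graph bounds \eqref{bound_operator_H} and \eqref{bound_degree_H} from Lemma~\ref{lemma1} in place of \eqref{bound_operator_G} and \eqref{bound_degree_G}, yields, with probability at least $1-\nu$,
$$\min_{\pi\in S_N}\Vert \bar \mu_N^\pi(x)-d(x)\Vert_2 \le \sqrt[4]{\dfrac{2}{N}}\sqrt{\vertiii{T_W}+\vartheta(N)}+\vartheta(N) .$$
The point of the theorem is to remove the minimization over permutations by using monotonicity. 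Expanding the square gives
$$\Vert \bar \mu_N^\pi(x)-d(x)\Vert_2^2=\dfrac{1}{N}\sum_{i=1}^N\bar \mu_i^2-2\sum_{i=1}^N\bar \mu_{\pi(i)}a_i+\Vert d\Vert_2^2 ,$$
with $a_i:=\int_{B_i^N}d(x)\dx$; the first and last terms are independent of $\pi$, so minimizing the left-hand side is the same as maximizing $\sum_{i}\bar\mu_{\pi(i)}a_i$. Since $W$ is non-decreasing its degree function $d$ is non-decreasing, hence $a_1\le\cdots\le a_N$, while $\bar\mu_1\le\cdots\le\bar\mu_N$ by construction; the rearrangement inequality then shows the maximum is attained at $\pi=\mathrm{id}$. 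Thus the minimizer is the identity and $\Vert \bar \mu_N(x)-d(x)\Vert_2$ itself obeys the displayed bound.

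For the first term I would invoke Lemma~\ref{concentration} (stated below), which controls the concentration of the simple-graph spectrum around the weighted-graph spectrum: via Wielandt--Hoffman, $\Vert\mu_N-\bar\mu_N\Vert_2^2=\frac{1}{N^3}\sum_i|\lambda_i-\bar\lambda_i|^2\le \frac{1}{N^3}\Vert L_N-\bar L_N\Vert_F^2$, and concentration of the centred degree and adjacency fluctuations (the place where the assumption $\eta_W>0$ is needed) gives $\Vert\mu_N(x)-\bar\mu_N(x)\Vert_2\le\varphi(N)$ with high probability. Combining the two estimates in the triangle inequality and taking a union bound over the event of Lemma~\ref{concentration} and the weighted-graph event of Lemma~\ref{lemma1} (which holds with probability at least $1-\nu$) yields the claim with probability at least $1-3\nu$.

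The genuinely new step here is the rearrangement argument pinning the optimal permutation to the identity; it is short but is exactly where the non-decreasing hypothesis is consumed. The heaviest technical work—the spectral concentration producing $\varphi(N)$—is isolated in Lemma~\ref{concentration}, so within this proof the main thing to get right is the coupling of the Wielandt--Hoffman permutation bound with the rearrangement identity, ensuring that the permutation realizing $\min_{\pi}\Vert\bar\mu_N^\pi-d\Vert_2$ is the identity, so that the already-sorted step function $\bar\mu_N$ (and, after the concentration step, $\mu_N$) satisfies the bound with no reordering.
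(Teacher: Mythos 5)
Your proposal is correct and follows essentially the same route as the paper's proof: both pass through $\bar G_N$ via the triangle inequality, bound the $\bar\mu_N$--to--$d$ discrepancy with Wielandt--Hoffman, Lemma~\ref{lem_Frobenius} and the weighted-graph bounds \eqref{bound_operator_H}, \eqref{bound_degree_H} (yielding $\vartheta(N)$), control $\Vert\mu_N-\bar\mu_N\Vert_2$ by $\varphi(N)$ via Lemma~\ref{concentration}, and conclude with a union bound giving $1-3\nu$. The only difference is cosmetic, namely where the monotonicity hypothesis is consumed: the paper uses it to order the degrees $\bar d_1\le\cdots\le\bar d_N$ so that the identity achieves the Wielandt--Hoffman minimum $\min_\pi\sum_i\vert\bar\lambda_{\pi(i)}-\bar d_i\vert^2$, whereas you use it to order $a_i=\int_{B_i^N}d(x)\dx$ and pin the identity as the minimizer of $\Vert\bar\mu_N^\pi(x)-d(x)\Vert_2$ -- two instances of the same rearrangement fact, leading to the same bound.
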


To prove Theorem~\ref{thm:spectrum}, we first need the following concentration results for the normalized degrees of $G_N$ and $\bar G_N$ and for the normalized eigenvalues of the corresponding Laplacian matrices.

\begin{lemma}\label{concentration}
Given a graphon $W$ with infimum $\eta_W>0$, if $N$ is large enough, with probability at least $1-\nu$ the normalized degrees  of the  graphs $G_N$ and $\bar G_N$ sampled from $W$ satisfy:
\begin{equation}\label{bound_delta}
\max_{i=1,\dots, N} \vert \delta_{(i)}-\bar \delta_{(i)}\vert \leq \sqrt{\dfrac{ \log(2N/\nu)}{N\eta_W}}:=\gamma(N),
\end{equation}
and with probability at least $1-2\nu$
the normalized eigenvalues of their Laplacian matrices
$L_N$ and $\bar L_N$ satisfy:
\begin{equation}\label{bound_mu}
\max_{i=1,\dots, N}  \vert \mu_i-\bar\mu_i \vert\leq 
\left(\frac{1}{\sqrt{\eta_W}}+2\right) \sqrt{\frac{\log(2N/\nu)}{N}}
:=\varphi(N).
\end{equation}
\end{lemma}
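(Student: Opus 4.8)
The plan is to condition on the sample $X=(X_1,\dots,X_N)$ and exploit the observation from Section~\ref{sec:sampling} that, given $X$, the entries of $A_N$ above the diagonal are independent Bernoulli variables with means $\bar A_N(i,j)$, so that $\bar d_i=\mathbb{E}[d_i\mid X]$. Both inequalities then follow from concentration applied conditionally on $X$: a scalar bound for \eqref{bound_delta} and a matrix bound for \eqref{bound_mu}. Since the resulting high-probability estimates are uniform in $X$, they hold unconditionally as well.

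For \eqref{bound_delta} I would first establish the \emph{unordered} estimate $\max_i|\delta_i-\bar\delta_i|\le\gamma(N)$ and then pass to the ordered one. Conditionally on $X$, the degree $d_i=\sum_{j\neq i}A_N(i,j)$ is a sum of independent Bernoulli variables with mean $\bar d_i$, so a multiplicative Chernoff bound gives $\mathbb{P}(|d_i-\bar d_i|\ge t\,\bar d_i\mid X)\le 2\exp(-c\,t^2\bar d_i)$ for a universal constant $c$. Because $W\ge\eta_W$ forces $\bar d_i\ge N\eta_W$, choosing $t=\sqrt{\log(2N/\nu)/(cN\eta_W)}$ makes the right-hand side at most $\nu/N$ for every $i$; this is exactly where the factor $1/\eta_W$ enters, namely through the lower bound $\bar d_i\ge N\eta_W$ used in the exponent (and not through the multiplicative deviation, where we merely bound $\bar\delta_i\le1$). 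A union bound over $i\in\until{N}$ then yields $\max_i|\delta_i-\bar\delta_i|=\max_i|d_i-\bar d_i|/N\le t\,\bar\delta_i\le t=\gamma(N)$ with probability at least $1-\nu$. Finally, since the nondecreasing-rearrangement map is nonexpansive in $\ell^\infty$, we have $\max_i|\delta_{(i)}-\bar\delta_{(i)}|\le\max_i|\delta_i-\bar\delta_i|$, which gives \eqref{bound_delta}.

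For the eigenvalue bound \eqref{bound_mu} I would compare the two symmetric Laplacians by perturbation. By Weyl's inequality, $\max_i|\lambda_i-\bar\lambda_i|\le\Vert L_N-\bar L_N\Vert_2$, where $\Vert\cdot\Vert_2$ denotes the spectral norm of a matrix; writing $L_N-\bar L_N=(D_N-\bar D_N)-(A_N-\bar A_N)$ and using the triangle inequality splits the problem into a diagonal part and a centred-adjacency part. The diagonal part is controlled by the unordered degree estimate already obtained: $\Vert D_N-\bar D_N\Vert_2=\max_i|d_i-\bar d_i|\le N\gamma(N)=\sqrt{N\log(2N/\nu)/\eta_W}$, which after dividing by $N$ contributes the term $\frac{1}{\sqrt{\eta_W}}\sqrt{\log(2N/\nu)/N}$ to $\varphi(N)$. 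For the centred-adjacency part, conditionally on $X$ the matrix $A_N-\bar A_N$ is symmetric with independent, mean-zero, $[-1,1]$-valued entries above the diagonal (its diagonal is deterministic given $X$, has spectral norm at most $1$, and is absorbed as a lower-order term). A matrix Bernstein inequality, with matrix-variance parameter at most $N$ and per-summand norm at most $1$, gives $\Vert A_N-\bar A_N\Vert_2\le 2\sqrt{N\log(2N/\nu)}$ with probability at least $1-\nu$, once $N$ is large enough that $N$ dominates the Bernstein correction $t/3$; dividing by $N$ yields the remaining term $2\sqrt{\log(2N/\nu)/N}$. Intersecting the degree event (probability $\ge1-\nu$) with the matrix event (probability $\ge1-\nu$) and summing the two contributions to $\max_i|\mu_i-\bar\mu_i|=\frac1N\max_i|\lambda_i-\bar\lambda_i|$ delivers \eqref{bound_mu} with probability at least $1-2\nu$.

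I expect the main obstacle to be the spectral-norm control of the centred random matrix $A_N-\bar A_N$: unlike the degree estimate, this is a genuinely matrix-level concentration statement, and care is needed to produce precisely the constant $2$ in front of $\sqrt{N\log(2N/\nu)}$ while keeping the failure probability at $\nu$ despite the $2N$ prefactor of the Bernstein bound. This is where the ``large enough $N$'' hypothesis of Definition~\ref{largeN} is used, to guarantee the sub-Gaussian regime $t/3\le N$, and where the harmless diagonal correction, together with the $O(1)$ discrepancy between $\bar d_i$ and $\mathbb{E}[d_i\mid X]$ on the diagonal, must be accounted for. The scalar Chernoff step, the sorting contraction, and Weyl's inequality are, by comparison, routine.
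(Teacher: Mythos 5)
Your proposal is correct and follows essentially the same route as the paper's proof: a conditional Chernoff bound plus a union bound for the degrees (your $\ell^\infty$-nonexpansiveness of the nondecreasing rearrangement is exactly what the paper obtains from Weyl's theorem applied to the diagonal matrices $D_N$ and $\bar D_N$), then Weyl's inequality with the splitting $L_N-\bar L_N=(D_N-\bar D_N)-(A_N-\bar A_N)$, where the diagonal term is controlled by the degree event and the centred adjacency term by spectral-norm concentration, with the same $1-\nu-\nu = 1-2\nu$ probability accounting. The only cosmetic difference is that you derive $\Vert A_N-\bar A_N\Vert_2\leq 2\sqrt{N\log(2N/\nu)}$ directly via matrix Bernstein (carefully noting the diagonal discrepancy and the large-$N$ condition), whereas the paper cites \cite[Theorem 1]{chung2011spectra} for $\Vert A_N-\bar A_N\Vert_2\leq\sqrt{4\bar d_{(N)}\log(2N/\nu)}$ and then uses $\bar d_{(N)}\leq N$ — the same bound, itself proved by matrix concentration in that reference.
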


\begin{proof}
For the first part of the proof, we use Chernoff bound, as in \cite[Proof of Theorem 2]{chung2011spectra},
thanks to the remark that $\bar d_i$ is the expectation of $d_i$, conditioned on $X$, and that $d_i$ is the $i$th row-sum of $A_N$.
By the Chernoff bound, for any given~$i$:
$$
\mathrm{Pr}[\vert d_i-\bar d_i\vert>b\,\bar d_i]\leq \dfrac{\nu}{N} \quad \mathrm{if} \quad b\geq \sqrt{\dfrac{\log(2N/\nu)}{\bar d_i}}.
$$
Since $\bar d_i \ge N \eta_W$,
by considering $b=\sqrt{\dfrac{\log(2N/\nu)}{N\eta_W}}$,
for any given $i$, we have with probability at least $1-\nu/N$:
$$
\vert d_i-\bar d_i\vert\leq\sqrt{\dfrac{\log(2N/\nu)}{N\eta_W}}\bar d_i\leq\sqrt{\dfrac{\log(2N/\nu)}{N\eta_W}} \, \bar d_{(N)}.
$$
Hence, with probability at least $1-\nu$ this bound is true for all $i = 1,\dots, N$.
Since $D_N-\bar D_N$ is diagonal, 
\begin{equation}\label{lem3_1}
\Vert D_N-\bar D_N\Vert_2=\max_{i=1,\ldots,N}\vert d_i-\bar d_i\vert\leq\sqrt{\dfrac{\log(2N/\nu)}{N\eta_W}} \, \bar d_{(N)}
\end{equation}
with probability at least $1-\nu$.

From Weyl's Theorem,
$\max_i |d_{(i)} - \bar d_{(i)}| \le \Vert D_N-\bar D_N\Vert_2$,
which ends the proof of \eqref{bound_delta}, recalling that $\bar d_{(N)} \le N$, $d_{(i)}=N\delta_{(i)}$ and $\bar d_{(i)}=N\bar \delta_{(i)}$.

For the second part of the lemma,
we have:
\begin{align*}
\Vert L_N-\bar L_N \Vert_2&=\Vert D_N-A_N-\bar D_N+\bar A_N \Vert_2 \\
 &\leq\Vert D_N-\bar D_N\Vert_2+\Vert \bar A_N-A_N \Vert_2.
\end{align*}
From \cite[Theorem 1]{chung2011spectra}, we have that with probability at least $1-\nu$:
\begin{equation}\label{prob_event}
\Vert A_N-\bar A_N \Vert_2\leq\sqrt{4\bar d_{(N)}\log(2N/\nu)}.
\end{equation}
By combining \eqref{lem3_1} and \eqref{prob_event} we have with probability at least $1-2 \nu$:
$$
\Vert L_N-\bar L_N \Vert_2\leq\sqrt{\dfrac{\bar d_{(N)}^2 \log(2N/\nu)}{N\eta_W}}+\sqrt{4\bar d_{(N)} \log(2N/\nu)}.
$$
By using Weyl's Theorem and considering the normalized eigenvalues we get:
$$
\max_i \vert \mu_i-\bar\mu_i \vert\leq \sqrt{\dfrac{\bar d_{(N)}^2 \log(2N/\nu)}{N^3\eta_W}}+\sqrt{\dfrac{4\bar d_{(N)} \log(2N/\nu)}{N^2}}  .
$$
Finally, since $\bar d_{(N)}\leq N$, we get the desired result.
\end{proof}

The proof of Theorem~\ref{thm:spectrum} and the proof of Theorem~\ref{main_theorem} in the next section heavily rely both on Lemma~\ref{lemma1} and on Lemma~\ref{concentration}.
More precisely, each proof will require the simultaneous use of various bounds from such lemmas; since each bound holds with some probability, the following lemma is about the joint probability of the bounds of interest.
\begin{lemma} \label{lemma-rem_prob}
Given a piecewise Lipschitz graphon $W$ with infimum $\eta_W>0$, if $N$ is large enough, then:
\begin{itemize}
    \item with probability $1-3 \nu$, the three bounds 
    \eqref{bound_operator_H}, \eqref{bound_degree_H} and  \eqref{bound_mu} hold true;
    \item with probability $1-3 \nu$, the four bounds \eqref{bound_operator_G}, \eqref{bound_degree_G},
     \eqref{bound_delta}  and  \eqref{bound_mu} hold true.
\end{itemize}
\end{lemma}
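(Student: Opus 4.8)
The plan is to resist treating the six bounds of Lemmas~\ref{lemma1} and \ref{concentration} as separate events (which would cost too much probability) and instead to trace each of them back to a small common pool of elementary ``good events''. Inspecting the two proofs, I would isolate exactly three such events, each of probability at least $1-\nu$: (i) the event $\mathcal H$ coming from the analysis of $\bar G_N$ in \cite{avella2018centrality}, on which both \eqref{bound_operator_H} and \eqref{bound_degree_H} hold and which depends only on the samples $X$; (ii) the Chernoff event $\mathcal A$ from the proof of Lemma~\ref{concentration}, on which $\Vert D_N-\bar D_N\Vert_2\le\sqrt{\log(2N/\nu)/(N\eta_W)}\,\bar d_{(N)}$; and (iii) the Chung--Radcliffe event $\mathcal B$ of \cite[Theorem 1]{chung2011spectra}, on which $\Vert A_N-\bar A_N\Vert_2\le\sqrt{4\bar d_{(N)}\log(2N/\nu)}$.

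Next I would re-express each of the six bounds as a statement valid on a suitable intersection of $\mathcal H$, $\mathcal A$, $\mathcal B$. By construction \eqref{bound_operator_H} and \eqref{bound_degree_H} hold on $\mathcal H$, and \eqref{bound_delta} holds on $\mathcal A$ through Weyl's theorem. The crucial point is that the edge-sampling contribution $\sqrt{4\log(2N/\nu)/N}$ in $\phi(N)$ is itself governed by $\mathcal B$: since $W_N-\bar W_N$ is the step graphon of $A_N-\bar A_N$, one has $\vertiii{T_{W_N}-T_{\bar W_N}}=\tfrac1N\Vert A_N-\bar A_N\Vert_2$, and likewise $\Vert d_N-\bar d_N\Vert_2\le\tfrac1N\Vert A_N-\bar A_N\Vert_2$ because $d_i-\bar d_i=((A_N-\bar A_N)\mathbf 1)_i$; on $\mathcal B$, using $\bar d_{(N)}\le N$, both quantities are at most $\sqrt{4\log(2N/\nu)/N}$. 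Combining these with \eqref{bound_operator_H} and \eqref{bound_degree_H} via the triangle inequality shows that \eqref{bound_operator_G} and \eqref{bound_degree_G} hold on $\mathcal H\cap\mathcal B$. Finally, reading off the proof of Lemma~\ref{concentration}, the bound \eqref{bound_mu} holds on $\mathcal A\cap\mathcal B$, where $\mathcal A$ controls $\Vert D_N-\bar D_N\Vert_2$ and $\mathcal B$ controls $\Vert A_N-\bar A_N\Vert_2$.

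With this accounting both claims reduce to a single union bound over $\{\mathcal H,\mathcal A,\mathcal B\}$. For the first claim, \eqref{bound_operator_H} and \eqref{bound_degree_H} need only $\mathcal H$ while \eqref{bound_mu} needs $\mathcal A\cap\mathcal B$, so all three hold on $\mathcal H\cap\mathcal A\cap\mathcal B$, whose complement has probability at most $3\nu$. For the second claim, \eqref{bound_operator_G} and \eqref{bound_degree_G} need $\mathcal H\cap\mathcal B$, \eqref{bound_delta} needs $\mathcal A$, and \eqref{bound_mu} needs $\mathcal A\cap\mathcal B$; the union of all required events is again just $\mathcal H\cap\mathcal A\cap\mathcal B$, so the four bounds hold simultaneously with probability at least $1-3\nu$.

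I expect the main obstacle to be precisely the second claim, where a naive union bound combining the ``$1-2\nu$'' block \eqref{bound_operator_G}--\eqref{bound_degree_G}, the ``$1-\nu$'' bound \eqref{bound_delta}, and the ``$1-2\nu$'' bound \eqref{bound_mu} would only yield $1-5\nu$. The real content of the lemma is the observation that $\mathcal B$ is reused by both the operator/degree estimates of Lemma~\ref{lemma1} and the eigenvalue estimate \eqref{bound_mu}, while $\mathcal A$ is reused by \eqref{bound_delta} and \eqref{bound_mu}; establishing this requires opening up the cited results rather than invoking them as black boxes, and in particular verifying the identity $\vertiii{T_{W_N}-T_{\bar W_N}}=\tfrac1N\Vert A_N-\bar A_N\Vert_2$ that ties the edge-sampling term of $\phi(N)$ to the Chung--Radcliffe event $\mathcal B$.
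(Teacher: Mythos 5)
Your proposal is correct and follows essentially the same route as the paper: isolate the three elementary events (the $\bar G_N$ event behind \eqref{bound_operator_H}--\eqref{bound_degree_H}, the Chernoff event \eqref{lem3_1}, and the Chung--Radcliffe event \eqref{prob_event}), observe that each of the six bounds holds on an intersection of these, and conclude with a union bound giving $1-3\nu$. The only difference is one of self-containedness: where the paper simply cites \cite[Proof of Theorem 1]{avella2018centrality} to assert that \eqref{prob_event} is the event underlying \eqref{bound_operator_G} and \eqref{bound_degree_G}, you reconstruct that fact explicitly via the (correct) identity $\vertiii{T_{W_N}-T_{\bar W_N}}=\tfrac{1}{N}\Vert A_N-\bar A_N\Vert_2$ and the corresponding degree estimate, which opens up the cited black box but does not change the substance of the argument.
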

\begin{proof}
The first statement is immediately obtained: Lemma~\ref{lemma1} ensures that with probability at least $1 -  \nu$ both \eqref{bound_operator_H} and \eqref{bound_degree_H} hold true, while Lemma~\ref{concentration} ensures that \eqref{bound_mu} holds true with probability at least $1 - 2 \nu$. Hence, the probability that all three bounds hold true is at least $1 - \nu - 2\nu = 1 - 3\nu$.

For the second statement, we need a closer look at the proofs of Lemmas~\ref{lemma1} and~\ref{concentration}.
The same event \eqref{lem3_1}, which has probability at least $1 - \nu$,
is used in the proof of both statements \eqref{bound_delta} and \eqref{bound_mu}
of Lemma~\ref{concentration}. 
Hence, the probability that both bounds hold together is at least $1-2 \nu$.
Moreover, the same event \eqref{prob_event}, 
which has probability at least $1-\nu$ thanks to \cite[Theorem 1]{chung2011spectra}, is used
both in the above proof of Lemma~\ref{concentration} and in the proof of the two statements \eqref{bound_operator_G} and \eqref{bound_degree_G} in Lemma~\ref{lemma1} 
(see \cite[Proof of Theorem 1]{avella2018centrality} for the latter proof).
Hence, the probability that all the bounds \eqref{bound_delta}, \eqref{bound_mu}, \eqref{bound_operator_G}, and \eqref{bound_degree_G}  hold true together is at least $1-3 \nu$.
\end{proof}

Now we have all the tools needed to prove Theorem~\ref{thm:spectrum}.
\begin{proof}[Proof of Theorem~\ref{thm:spectrum}]
In this proof, we will make use of the bounds \eqref{bound_operator_H}, \eqref{bound_degree_H} and  \eqref{bound_mu} from Lemmas~\ref{lemma1} and \ref{concentration}; by Lemma~\ref{lemma-rem_prob}, the event that all three bounds hold true has probability at least $1 - 3 \nu$.

We start from the left-hand side of the claimed inequality, and we use the triangle inequality to obtain that:
\begin{multline} \label{eq:proof-thm2}
\Vert \mu_N(x)-d(x)\Vert_2 \leq
\Vert \mu_N(x)-\bar \mu_N(x) \Vert_2\\
+\Vert \bar\mu_N(x)-\bar d_N(x)\Vert_2
+\Vert \bar d_N(x)-d(x) \Vert_2.
\end{multline}
The first term can be rewritten and bounded by using \eqref{bound_mu}:
\begin{equation}
\Vert \mu_N(x)-\bar \mu_N(x)\Vert_2=\left(\dfrac{1}{N}\sum_{i=1}^N\vert \mu_i-\bar \mu_i\vert^2\right)^{1/2}
\leq\varphi(N) \label{eq_th2_1}.
\end{equation}
For the second term,
we have:
\begin{equation*}
\Vert \bar\mu_N(x)-\bar d_N(x)\Vert_2 \\
 = \dfrac{1}{N^{3/2}}\left(\sum_{i=1}^N\vert \bar\lambda_i- \bar d_i\vert^2\right)^{1/2}.
\end{equation*}
By applying Wielandt-Hoffman theorem to $\bar A_N = \bar D_N - \bar L_N$, we obtain:
\begin{equation}
\min_{\pi \in S_N} \sum_{i=1}^N\vert \bar\lambda_{\pi(i)}- \bar d_i\vert^2
\leq \Vert \bar A_N \Vert_F^2 .
\label{eq:WH}
\end{equation}
The assumption that $W$ is non-decreasing implies that also
$\bar d_N(x)$ is non-decreasing, i.e., 
$\bar d_1 \le \dots \le \bar d_N$. 
Indeed, since $\lambda_1 \le \dots \le \lambda_N$ by definition
and $\bar d_1 \le \dots \le \bar d_N$ thanks to the monotonicity assumption,
the minimum in \eqref{eq:WH} (Wielandt-Hoffman theorem) is achieved for the identity permutation that leaves all positions unchanged. Then we obtain:
$$
    \Vert \bar\mu_N(x)-\bar d_N(x)\Vert_2 
    = 
    \dfrac{1}{N^{3/2}}\left(\sum_{i=1}^N\vert \bar\lambda_i- \bar d_i\vert^2\right)^{\!\! 1/2}
    \le 
    \frac{\Vert \bar A_N \Vert_F}{N^{3/2}}.
$$
Then, we can use  Lemma~\ref{lem_Frobenius}, and
the inequality 
$\vertiii{T_{\bar W_N}} \le \vertiii{T_{\bar W_N}-T_W} + \vertiii{T_{W}}$
with the bound \eqref{bound_operator_H} 
to obtain that 
\begin{equation}\label{eq_th2_2}
\Vert \bar\mu_N(x)-\bar d_N(x)\Vert_2
\leq\sqrt[4]{\dfrac{2}{N}}\sqrt{\vertiii{T_W}+\vartheta(N)}.
\end{equation}
For the third term in the right-hand side of \eqref{eq:proof-thm2},
we use \eqref{bound_degree_H}. 

Finally, the desired result is obtained from \eqref{eq:proof-thm2} by combining the three
bounds \eqref{eq_th2_1}, \eqref{eq_th2_2} and \eqref{bound_degree_H}.
\end{proof}

Theorem~\ref{thm:spectrum} has implications on the asymptotic behavior of
$\Vert \mu_N(x)-d(x)\Vert_2$, which are discussed in the next remark.

\begin{remark} \label{remark:asymptotic-thm1}
The upper bound on $\Vert \mu_N(x)-d(x)\Vert_2$ in Theorem~\ref{thm:spectrum}
holds true with probability at least $1 - 3 \nu$, 
and has an expression which depends both on $\nu$ and on $N$.
We are interested in its asymptotic behaviour for $N \to \infty$.
When we consider a constant $\nu$, we can easily see that 
this upper bound  goes to zero as $O((\log(N)/N)^{1/4})$,
since $\varphi(N) =O((\log(N)/N)^{1/2})$ and $\vartheta(N) = O((\log(N)/N)^{1/4})$;
moreover, if the graphon is Lipschitz ($K=0$),
then the upper bound goes to zero as $O((1/N)^{1/4})$, since in this case $\vartheta(N) = O((\log(N)/N)^{1/2})$.
It is interesting to notice that  all these asymptotic behaviours remain
the same also when we consider $\nu = 1/N^\alpha$ for any positive constant $\alpha$, since this only affects  constant factors.
By choosing $\alpha >1$, we can then apply Borel-Cantelli Lemma and
 obtain that, under the assumptions of Thm.~\ref{thm:spectrum}, almost surely
$\Vert \mu_N(x)-d(x)\Vert_2$ decays to zero
as  $O((\log(N)/N)^{1/4})$, 
and under the further assumption that the graphon is  Lipschitz then almost surely
$\Vert \mu_N(x)-d(x)\Vert_2$ goes to zero as  $O((1/N)^{1/4})$.  

Similarly, from Prop.~\ref{boundLaplacian} we  obtain that for any piecewise Lipschitz graphon
almost surely $\min_{\pi \in S_N} \Vert \mu^\pi_N(x)-d(x)\Vert_2$
 decays to zero as  $O((\log(N)/N)^{1/4})$, 
 and for any  Lipschitz graphon
almost surely $\min_{\pi \in S_N} \Vert \mu^\pi_N(x)-d(x)\Vert_2$
 decays to zero as  $O((1/N)^{1/4})$.
\end{remark}

\subsection{Spectral gap}\label{sect:gap}

The results in Section~\ref{sec:spectrum-dsitribution} concern the distribution of eigenvalues of the Laplacian matrix. However, it is often useful to obtain more detailed information on small eigenvalues, and in particular on the second largest, to see its distance from zero. This distance, also known as the spectral gap, is a measure of how well connected is the graph and plays a crucial role in shaping the properties of graph-based dynamics such as random walks on graphs and consensus-seeking systems~\cite{PD-DS:91,FRC:06,fagnani2018averaging}.

In this section we give some results on $\bar \mu_2$, the spectral gap of $\bar G_N$, and on $\mu_2$, the spectral gap of $G_N$: the results about $\bar \mu_2$ will become useful in the next section. The two spectral gaps are closely related, due to Lemma~\ref{concentration}. 
\begin{remark} \label{rem_mu2-barmu2}
Given a graphon $W$ with infimum $\eta_W>0$, by Lemma~\ref{concentration} we have
$|\mu_2 - \bar \mu_2| \le \varphi(N)$ with probability at least $1-2\nu$.
By taking $\nu = 1/N^{\alpha}$ for some $\alpha>1$, we obtain
that $|\mu_2 - \bar \mu_2| = O((\log(N)/N)^{1/2})$ with probability at least $1-O(1/N^\alpha)$, and hence we
can apply Borel-Cantelli Lemma to conclude that almost surely
$|\mu_2 - \bar \mu_2| $ decays to zero as $O((\log(N)/N)^{1/2})$.
\end{remark}

\begin{lemma}\label{lemma:second_eig}
For a complete weighted graph sampled from a graphon with infimum $\eta_W$:
$$
\bar\mu_2\ge\eta_W.
$$
\end{lemma}
\begin{proof}
We  use the variational characterization of eigenvalues (Courant-Fischer theorem).
Since $\bar \lambda_1 = 0$ with eigenvector $1_{\!N}$ (the all-ones vector of size $N$),
\begin{equation} \label{eq:courant-fischer}
\bar \mu_2  = \frac{1}{N} \bar \lambda_2
= \frac{1}{N} 
  \min_{\substack{x: \,  x^T \! x=1 \\  x^T\!1_N=0}} x^T \bar L_N x.
\end{equation}
Since $\bar L_N$ is a symmetric Laplacian matrix,
\[ x^T \bar L_N x 
\!=\! \frac{1}{2}\!\sum_i \sum_{j}  \bar A_N(i,j) (x_i-x_j)^2\!
\ge\! \frac{\eta_W}{2}\sum_i \sum_{j}\!  (x_i-x_j)^2 \!.
\]
Then notice that
$\sum_i \sum_{j}  (x_i-x_j)^2 =  2N[(\sum_i x_i^2) -(\sum_i x_i)^2 ]$
and hence for all $x$ such that $x^Tx=1$ and $x^T1_N = 0$ we have
$\sum_i \sum_{j}  (x_i-x_j)^2 = 2 N$, so that
$ x^T \bar L_N x \ge N \eta_W$.
With this, together with \eqref{eq:courant-fischer}, we can conclude that $\bar \mu_2 \ge \eta_W$.
\end{proof}

By combining \eqref{bound_mu} and Lemma~\ref{lemma:second_eig}, we obtain the following lower bound for $\mu_2$.
\begin{proposition}\label{prop_mu_2}
For a simple graph sampled from a graphon with infimum $\eta_W$ and for $N$ large enough, with probability at least $1-2\nu$:
$$
\mu_2\ge\eta_W-\varphi(N),
\quad \text{with $\varphi(N)$ as in \eqref{bound_mu}}. $$
\end{proposition}

From Proposition~\ref{prop_mu_2} we can see that a sufficient condition to guarantee that the spectral gap $\mu_2$ remains bounded away from zero a.s.\ 
is having a graphon with $\eta_W>0$.
Another case in which $\mu_2$  is guaranteed to remain bounded away from zero a.s.\ is given by Proposition~\ref{convergence} below
(based on the results in \cite{von2008consistency}): 
when the graphon $W$ is continuous, has $\delta_W >0$ and its zero eigenvalue $\kappa_1 = 0$ has multiplicity one.

\begin{proposition}\label{convergence}
Let $W$ be a continuous graphon.
Let $M$  be the number of isolated eigenvalues of $W$ in the interval $[0,\delta_W]$,  
counted with their multiplicities, and let $0 = \kappa_1 \le \kappa_2 \le \dots \le \kappa_M$ be such eigenvalues.
Define $\rho = \kappa_2$ if $M \ge 2$, and $\rho = \delta_W$ if $M = 1$.
Then,

$$
\lim_{N \to \infty} \mu_2 = \rho, \qquad \mathrm{a.s.}
$$

\end{proposition}

\begin{proof}
The results in \cite{von2008consistency} imply that $\lim_{N \to \infty} \bar \mu_2 = \rho$ a.s., as we show below.
Consider the operator $\mathcal{L}_{P_N}:C[0,1]\to C[0,1]$:
$$
(\mathcal{L}_{P_N} f)(x):=d_{P_N}(x)f(x)-\int_0^1 W(x,y)f(y) \, \mathrm dP_N(y),
$$
where $d_{P_N}(x):=\int_0^1W(x,y) \, \mathrm dP_N(y)$, $P_N:=1/N\sum_{i=1}^N \delta_{X_i}$ is the empirical distribution and $\delta_{X_i}$ is the Dirac measure. 
Isolated eigenvalues of $\mathcal{L}_{P_N}$ are also eigenvalues of $\bar L_N$ \cite[Proposition 22]{von2008consistency}. According to \cite[Proposition 23]{von2008consistency}, $\mathcal{L}_{P_N}$ converges compactly to $\mathcal{L}_W$ a.s., which implies the convergence of isolated parts of the spectrum and due to the upper-semicontinuity, the limits of convergent sequences are the isolated eigenvalues of $\mathcal{L}_W$. Additionally, \cite[Proposition 6]{von2008consistency} implies that for an isolated eigenvalue $\kappa$ with multiplicity $m$, there are $m$ sequences of eigenvalues that converge to $\kappa$.
If $\kappa_1=0$ has multiplicity 1 and there are no isolated eigenvalues in the interval $(0,\delta_W)$, then the second eigenvalue $\bar \mu_2$ will converge to $\delta_W$ according to \cite[Proposition 24]{von2008consistency}. 

Finally, by Remark~\ref{rem_mu2-barmu2} we have that almost surely 
$
\lim_{N\to\infty}\mu_2=\lim_{N\to\infty}\bar\mu_2, 
$
which completes the proof.
\end{proof}

Notice that Proposition~\ref{convergence} requires the additional assumption that the graphon is continuous, which was not required in Proposition~\ref{prop_mu_2}, but on the other hand it does not require the graphon to be bounded away from zero and moreover it gives a much richer result, since it characterizes the almost sure limit of $\mu_2$ for $N \to \infty$.

\subsection{Average effective resistance} \label{sec:resistance}

We consider the simple graph $G_N$ as an electrical network where all the edges have resistance equal to 1. Between two vertices $i$ and $j$, we denote the effective resistance $R_{\mathrm{eff}}(i,j)$ as the electrical potential difference induced between $i$ and $j$ by a unit current injected in $i$ and extracted from $j$. 
The average effective resistance of $G_N$ is defined as:
$$
R^{\mathrm{ave}}_N:=\dfrac{1}{2N^2} \sum_{i = 1}^N \sum_{j=1}^N R_{\mathrm{eff}}(i,j).
$$    
This quantity is also related to the spectrum of the Laplacian matrix of the graph \cite{fagnani2018averaging}:
$$
R^{\mathrm{ave}}_N=\dfrac{1}{N}\sum_{i=2}^N\dfrac{1}{\lambda_i} .
$$

This handy characterization immediately leads to draw some conclusions about its asymptotic behavior.
\begin{remark}\label{remark_rateconv}
To find the asymptotic behavior for $N\to\infty$ of the average effective resistance of graphs sampled from a piecewise Lipschitz graphon bounded away from zero, we can see that:
$$
NR_N^{\mathrm{ave}}=\sum_{i=2}^N\dfrac{1}{\lambda_i}\leq(N-1)\dfrac{1}{\lambda_2}\leq\dfrac{1}{\mu_2}.
$$ 
As noticed in Remark~\ref{rem_mu2-barmu2}, 
Lemma~\ref{concentration} implies that
$\vert \mu_2- \bar\mu_2\vert$ goes to zero a.s.
Moreover, by Lemma~\ref{lemma:second_eig}, $\bar \mu_2 \ge \eta_W>0$.
From this, we can conclude that almost surely also $\mu_2$ remains bounded away from zero, and hence
$R_N^{\mathrm{ave}}=O(1/N)$ a.s.
Also, since $\lambda_N\leq N$ we get:
$$
NR_N^{\mathrm{ave}}\geq (N-1)\dfrac{1}{N}.
$$
Therefore $R_N^{\mathrm{ave}}=\Theta(1/N)$ a.s. 
\end{remark}
Considering that the distribution of the eigenvalues of the Laplacian matrix is similar to the distribution of the degrees, we can estimate the average effective resistance $R_N$ of a simple graph $G_N$ through the degree function of the graphon $W$, by defining an analogous quantity as: 
$$
R_{W,N}^{\mathrm{ave}}:=\frac{1}{N}\int_0^1 \dfrac{1}{d(x)} \dx.
$$

\begin{theorem}\label{main_theorem}
For a piecewise Lipschitz graphon $W$ with minimum $\eta_W>0$ and for $N$ satisfying conditions \eqref{eq_condi1}, \eqref{eq_condi2}, \eqref{eq_condi3} and condition:
\begin{equation}\label{eq_condi4}
\frac{\log(2N/\nu)}{N} < \frac{\eta_W^2}{1+2\eta_W}  ,
\end{equation}
let $R^{\mathrm{ave}}_N$ be the average effective resistance of a graph $G_N$ sampled from $W$. Then,
with probability at least $1-3\nu$:
\begin{multline*}
\left\vert R^{\mathrm{ave}}_N-R_{W,N}^{\mathrm{ave}}\right\vert\leq\\
\dfrac{1}{N(\eta_W-\gamma(N))}\left(\dfrac{1}{N}+\dfrac{\phi(N)}{\delta_W}+\dfrac{\sqrt[4]{2}\sqrt{\vertiii{T_W}+\phi(N)}}{N^{1/4}(\eta_W-\varphi(N))}\right),
\end{multline*}
with
$\phi(N)$ as in Lemma~\ref{lemma1} and $\varphi(N)$ and $\gamma(N)$ as in Lemma~\ref{concentration}.
\end{theorem}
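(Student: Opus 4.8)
The plan is to collapse both quantities to scalars via the spectral formula $R^{\mathrm{ave}}_N=\frac{1}{N}\sum_{i=2}^N\frac{1}{\lambda_i}=\frac{1}{N^2}\sum_{i=2}^N\frac{1}{\mu_i}$, so that
$$
R^{\mathrm{ave}}_N-R_{W,N}^{\mathrm{ave}}=\frac{1}{N}\left(\frac{1}{N}\sum_{i=2}^N\frac{1}{\mu_i}-\int_0^1\frac{1}{d(x)}\dx\right),
$$
and it suffices to bound the bracketed quantity, call it $E$, and divide by $N$. The observation that makes the argument work without the monotonicity hypothesis of Theorem~\ref{thm:spectrum} is that $\sum_i 1/\mu_i$ is symmetric in the eigenvalues, so I am free to pair the sorted $\mu_i$ with the sorted normalized degrees $\delta_{(i)}$ exactly as in Proposition~\ref{prop_Distribution}. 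I would therefore split $E=A+B+C$ with
\begin{align*}
A&=\frac{1}{N}\sum_{i=2}^N\left(\frac{1}{\mu_i}-\frac{1}{\delta_{(i)}}\right),\\
B&=\frac{1}{N}\sum_{i=2}^N\frac{1}{\delta_{(i)}}-\int_0^1\frac{1}{d_N(x)}\dx,\\
C&=\int_0^1\frac{1}{d_N(x)}\dx-\int_0^1\frac{1}{d(x)}\dx.
\end{align*}
Throughout I would use the lower bounds $\mu_i\ge\mu_2\ge\eta_W-\varphi(N)$ for $i\ge 2$ (from $\bar\mu_2\ge\eta_W$ in Proposition~\ref{prop:second_eig} combined with \eqref{bound_mu}) and $\delta_{(i)}\ge\eta_W-\gamma(N)$ (from $\bar\delta_{(i)}\ge\eta_W$ and \eqref{bound_delta}); condition \eqref{eq_condi4} is imposed precisely to keep these bounds, hence all the denominators below, strictly positive, and in particular to force $\mu_2>0$ so that $G_N$ is connected and $\lambda_1=0$ is its only zero eigenvalue.

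The main work is term $A$, which carries the eigenvalue-versus-degree error. Writing $\frac{1}{\mu_i}-\frac{1}{\delta_{(i)}}=\frac{\delta_{(i)}-\mu_i}{\mu_i\delta_{(i)}}$ and inserting the two lower bounds gives
$$
|A|\le\frac{1}{N(\eta_W-\varphi(N))(\eta_W-\gamma(N))}\sum_{i=2}^N|\mu_i-\delta_{(i)}|.
$$
I would then estimate the remaining sum by Cauchy--Schwarz, $\sum_{i=2}^N|\mu_i-\delta_{(i)}|\le\sqrt{N}\left(\sum_{i=1}^N|\mu_i-\delta_{(i)}|^2\right)^{1/2}=N\Vert\mu_N-\tilde d_N\Vert_2$, and control the last norm directly by Proposition~\ref{prop_Distribution}, namely $\Vert\mu_N-\tilde d_N\Vert_2\le\sqrt[4]{2/N}\sqrt{\vertiii{T_W}+\phi(N)}$. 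Combining produces precisely the third summand of the claimed bound, once the factor $1/(\eta_W-\gamma(N))$ is pulled out. This is the step I expect to be the main obstacle: it is where the permutation-invariance of $R^{\mathrm{ave}}_N$, the spectral-gap lower bound, and the Frobenius/operator-norm estimate behind Proposition~\ref{prop_Distribution} must all be combined, and where the uniform positive lower bound on the small eigenvalues is genuinely needed.

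Terms $B$ and $C$ are routine. Since $\int_0^1 d_N(x)^{-1}\dx=\frac{1}{N}\sum_{i=1}^N\delta_{(i)}^{-1}$, the term $B$ equals $-\frac{1}{N\delta_{(1)}}$ exactly, so $|B|\le\frac{1}{N(\eta_W-\gamma(N))}$, the first summand. For $C$ I would write $C=\int_0^1\frac{d(x)-d_N(x)}{d_N(x)d(x)}\dx$ and use $d(x)\ge\delta_W$ together with $d_N(x)\ge\eta_W-\gamma(N)$ to get $|C|\le\frac{1}{\delta_W(\eta_W-\gamma(N))}\Vert d_N-d\Vert_1\le\frac{\phi(N)}{\delta_W(\eta_W-\gamma(N))}$, via $\Vert\cdot\Vert_1\le\Vert\cdot\Vert_2$ on $[0,1]$ and \eqref{bound_degree_G}; this is the second summand. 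Assembling $|E|\le|A|+|B|+|C|$, factoring $1/(\eta_W-\gamma(N))$, and dividing by $N$ yields the stated inequality. Finally, the probability is settled by Lemma~\ref{lemma-rem_prob}: the four bounds actually used---\eqref{bound_operator_G} (inside Proposition~\ref{prop_Distribution}), \eqref{bound_degree_G}, \eqref{bound_delta} and \eqref{bound_mu}---are exactly those whose joint validity it guarantees with probability at least $1-3\nu$.
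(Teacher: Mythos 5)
Your proposal is correct and takes essentially the same route as the paper's own proof: your split $A+B+C$ is just a regrouping of the paper's two-term decomposition (your $A+B$ together form the paper's term $\bigl\vert \Vert r_N\Vert_1-\Vert \tfrac{1}{Nd_N}\Vert_1\bigr\vert$, your $C$ its second term), and you use the same ingredients---sorted-degree pairing via permutation invariance, the Cauchy--Schwarz/Wielandt--Hoffman/Lemma~\ref{lem_Frobenius} chain, the lower bounds $\delta_{(1)}\ge\eta_W-\gamma(N)$ and $\mu_2\ge\eta_W-\varphi(N)$ from Proposition~\ref{prop:second_eig} with \eqref{bound_delta}--\eqref{bound_mu}, and the probability accounting of Lemma~\ref{lemma-rem_prob}. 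The only cosmetic difference is that you invoke Proposition~\ref{prop_Distribution} as a black box where the paper reruns that computation inline, which is legitimate because its conclusion is a deterministic consequence of \eqref{bound_operator_G}, one of the four bounds in the joint event.
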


\begin{proof}
In this proof, we will make use of the bounds \eqref{bound_operator_G}, \eqref{bound_degree_G},
\eqref{bound_delta} and  \eqref{bound_mu} from Lemmas~\ref{lemma1} and \ref{concentration}; by Lemma~\ref{lemma-rem_prob}, the event that all four bounds hold true has probability at least $1 - 3 \nu$.

If we define the step function
$$ r_N(x)= \sum_{i=2}^N\dfrac{1}{\lambda_i}\mathbbm{1}_{B_i^N}(x),$$ 
it is easy to see that
$
R^{\mathrm{ave}}_N=\Vert r_N(x) \Vert_{1}$ and $ R_{W,N}^{\mathrm{ave}}=\left\Vert \dfrac{1}{Nd(x)} \right\Vert_{1},
$
so that we have:
\begin{multline}
\left\vert R^{\mathrm{ave}}_N-R_{W,N}^{\mathrm{ave}}\right\vert=\left\vert \Vert r_N(x)\Vert_1-\left\Vert \dfrac{1}{Nd(x)}\right\Vert_1\right\vert \\
\leq
\left\vert\Vert r_N(x)\Vert_1 \! -
\!\left\Vert\dfrac{1}{Nd_N(x)}\right\Vert_1\right\vert
+
\left\vert\left\Vert\dfrac{1}{Nd_N(x)}\right\Vert_1
\!\!\!-\! \left\Vert\dfrac{1}{Nd(x)}\right\Vert_{1}\right\vert \!. \label{eq:proof-main-thm}
\end{multline}
We start by studying  the first term in \eqref{eq:proof-main-thm}.
We notice that 
$$ 
\left\Vert \dfrac{1}{N d_N(x)}\right\Vert_1
=  \frac{1}{N} \sum_{i=1}^N \frac{1}{d_{i}}
=  \frac{1}{N} \sum_{i=1}^N \frac{1}{d_{(i)}}    
= \left\Vert \dfrac{1}{N \tilde d_N(x)}\right\Vert_1 .
$$
We use this remark and the
reverse triangle inequality (i.e., $\Big\vert \Vert x \Vert-\Vert y \Vert \Big\vert\leq\Vert x-y\Vert$), to obtain
$$
\left\vert\Vert r_N(x)\Vert_1-\left\Vert\dfrac{1}{Nd_N(x)}\right\Vert_1\right\vert
\le
\left\Vert r_N(x)-\dfrac{1}{N\tilde d_N(x)}\right\Vert_{1} .
$$
Then, we have:
\begin{align*}
\left\Vert r_N(x)-\dfrac{1}{N\tilde d_N(x)}\right\Vert_{1}
&=\dfrac{1}{N^2\delta_{(1)}}+
    \dfrac{1}{N}\sum_{i=2}^N
    \left\vert \dfrac{d_{(i)}-\lambda_i}{d_{(i)}\lambda_i}\right\vert\\
&\leq \dfrac{1}{N^2\delta_{(1)}}+
    \dfrac{\sum_{i=2}^N\vert d_{(i)}-\lambda_i\vert}{N^3\mu_2\delta_{(1)}}.
\end{align*}
By using the norm inequality $\Vert \cdot \Vert_1 \leq \sqrt{N}\Vert \cdot \Vert_2$ and applying 
Wielandt-Hoffman Theorem to $A_N = D_N - L_N$, we obtain:
$$
\sum_{i=2}^N\vert d_{(i)}-\lambda_i\vert
\le 
\sqrt{N}\left(\sum_{i=2}^N\vert d_{(i)}-\lambda_i\vert^2\right)^{\!\!1/2}
\le 
\sqrt{N}\Vert A_N \Vert_F .
$$
Finally, we can use Lemma~\ref{lem_Frobenius} and \eqref{bound_operator_G} to obtain:
$$
\left\Vert r_N(x)-\dfrac{1}{N\tilde d_N(x)}\right\Vert_{1}\leq\dfrac{1}{N^2\delta_{(1)}}+\dfrac{\sqrt[4]{2}\sqrt{\vertiii{T_W}+\phi(N)}}{N^{5/4}\mu_2\delta_{(1)}}.
$$
Now we study the second term in \eqref{eq:proof-main-thm}. 
We use again the reverse triangle inequality and H\"older's inequality so that we have:
$$
\left\vert\left\Vert\dfrac{1}{Nd_N(x)}\right\Vert_1-\left\Vert\dfrac{1}{Nd(x)}\right\Vert_{1}\right\vert
\le
\left\Vert\dfrac{1}{Nd_N(x)}-\dfrac{1}{Nd(x)}\right\Vert_{2} \,.
$$
Then we obtain:
\begin{multline*}
\left\Vert\dfrac{1}{Nd_N(x)}-\dfrac{1}{Nd(x)}\right\Vert_{2}=\left(\int_0^1\left\vert \dfrac{d(x)-d_N(x)}{Nd(x)d_N(x)}\right\vert^2 \dx\right)^{\! 1/2}\\
\leq\dfrac{1}{N\delta_{(1)}\delta_W}\left(\int_0^1\left\vert d(x)-d_N(x)\right\vert^2 \dx\right)^{\! 1/2}.
\end{multline*}
Since $\left(\int_0^1\left\vert d(x)-d_N(x)\right\vert^2 \dx\right)^{\! 1/2}=\Vert d_N(x)-d(x) \Vert_{2}$, we can apply \eqref{bound_delta} and get:  
$$
\left\vert\left\Vert\dfrac{1}{Nd_N(x)}\right\Vert_1-\left\Vert\dfrac{1}{Nd(x)}\right\Vert_{1}\right\vert\leq\dfrac{1}{N\delta_{(1)}\delta_W}\phi(N).
$$
Using the bounds obtained for the two terms in \eqref{eq:proof-main-thm}, we get:
\begin{equation*}
\left\vert R^{\mathrm{ave}}_N-R_{W,N}^{\mathrm{ave}}\right\vert\!\leq\!\dfrac{1}{N^2\delta_{(1)}}+\dfrac{\sqrt[4]{2}\sqrt{\vertiii{T_W}+\phi(N)}}{N^{5/4}\mu_2\delta_{(1)}}+\dfrac{\phi(N)}{N\delta_{(1)}\delta_W}.
\end{equation*}
By using \eqref{bound_delta} and \eqref{bound_mu} 
we obtain: 
\begin{align*}
\left\vert R^{\mathrm{ave}}_N-R_{W,N}^{\mathrm{ave}}\right\vert
&\leq\dfrac{1}{N^2(\bar \delta_{(1)}-\gamma(N))}
+\dfrac{\phi(N)}{N\delta_W(\bar \delta_{(1)}-\gamma(N))}\\
&+\dfrac{\sqrt[4]{2}\sqrt{\vertiii{T_W}+\phi(N)}}{N^{5/4}(\bar \delta_{(1)}-\gamma(N))(\bar \mu_2-\varphi(N))}.
\end{align*}
Finally we get the desired result by using $\delta_{(1)}\geq\eta_W$ and
Lemma~\ref{lemma:second_eig}.
Notice that 
assumption $\eta_W>0$ and 
condition \eqref{eq_condi4} 
ensure that the denominators appearing in the upper bound are positive.
\end{proof}

\begin{remark}\label{remark:thm2}
Theorem~\ref{main_theorem} gives an upper bound on the absolute error
$\left\vert R^{\mathrm{ave}}_N-R_{W,N}^{\mathrm{ave}}\right\vert$.
This  bound
holds true with probability at least $1 - 3 \nu$, and has an expression which depends both on $\nu$ and on $N$.
We are interested in its asymptotic behaviour for $N \to \infty$.
When we consider a constant $\nu$ or $\nu=1/N^\alpha$,  this upper bound  goes to zero as $O((\log(N)/N^5)^{1/4})$,
and with the further assumption that the graphon is Lipschitz ($K=0$), it decays as
$O((1/N)^{5/4})$. 
By choosing $\alpha >1$, we can then apply Borel-Cantelli Lemma, and obtain that almost surely $\left\vert R^{\mathrm{ave}}_N-R_{W,N}^{\mathrm{ave}}\right\vert$ decays to zero as
$O((\log(N)/N^5)^{1/4})$, and moreover as $O((1/N)^{5/4})$ in case the graphon is Lipschitz.
It is also interesting to study the relative error
$\left\vert R^{\mathrm{ave}}_N-R_{W,N}^{\mathrm{ave}}\right\vert/R^{\mathrm{ave}}_N$.
Recalling  Remark~\ref{remark_rateconv} about the asymptotic behaviour of the denominator,
we obtain that the relative error almost surely decays to zero as $O((\log(N)/N)^{1/4})$,
and as $O((1/N)^{1/4})$ if the graphon is Lipschitz.
\end{remark}

\subsection{Deterministic Sampling}\label{sect:deterministic}

An alternative procedure for the generation of complete weighted graphs from graphons is the use of deterministic latent variables proposed in \cite{avella2018centrality}, such that the adjacency matrix of $\bar G_N$ is generated as:
$$
\bar A_N(i,j)=W(i/N,j/N) \qquad \text{for all } i,j\in\{1,\ldots,N\}.
$$
All the results of Section 3 (with exception of Proposition~\ref{convergence}) easily extend to deterministic sampling with minor adjustments, which we detail here.
Lemma~\ref{concentration} and Lemma~\ref{lemma:second_eig} do not depend on the sampling method and remain the same. As indicated in  \cite{avella2018centrality}, the factor $b_N$ of $\vartheta(N)$ and $\phi(N)$ in Lemma~\ref{lemma1} is redefined as $b_N:=1/N$ and \eqref{bound_operator_H} and \eqref{bound_degree_H} hold with probability 1 while \eqref{bound_operator_G} and \eqref{bound_degree_G} hold with probability at least $1-\nu$. 
With the new definition of $b_N$, Propositions~\ref{prop_Distribution} and \ref{boundLaplacian} hold with probability at least $1-\nu$ and Lemma~\ref{lemma-rem_prob} and Theorems~\ref{thm:spectrum} and \ref{main_theorem} hold with probability at least $1-2\nu$. The rates of convergence in Remarks~\ref{rem_mu2-barmu2} and \ref{remark_rateconv} do not change while for Remarks \ref{remark:asymptotic-thm1} and \ref{remark:thm2} the rate of convergence for piecewise Lipschitz and Lipschitz graphons is $O(1/N^{1/4})$.

Proposition~\ref{convergence} instead cannot be easily extended to the deterministic case because in \cite{von2008consistency} the compact convergence of the operators is proved by using Glivenko-Cantelli Theorem in one of the steps, which is formulated for random variables.

\section{Numerical example}

We consider the graphon $W(x,y)=1-0.8xy$,
which is  Lipschitz and bounded away from zero  (its minimum is $\eta_W=0.2$). 
Its  degree function is $d(x)=1-0.4x$, whose minimum is  $\delta_W = 0.6$. 
To validate the results, we consider a sequence of simple graphs that are randomly sampled from $W$ for $10\leq N \leq 1000$. Fig.~\ref{fig_norm2} presents the approximation of the distribution of the normalized eigenvalues by using the degree function of the graphon.

\begin{figure}
\centering
\includegraphics[width=75mm]{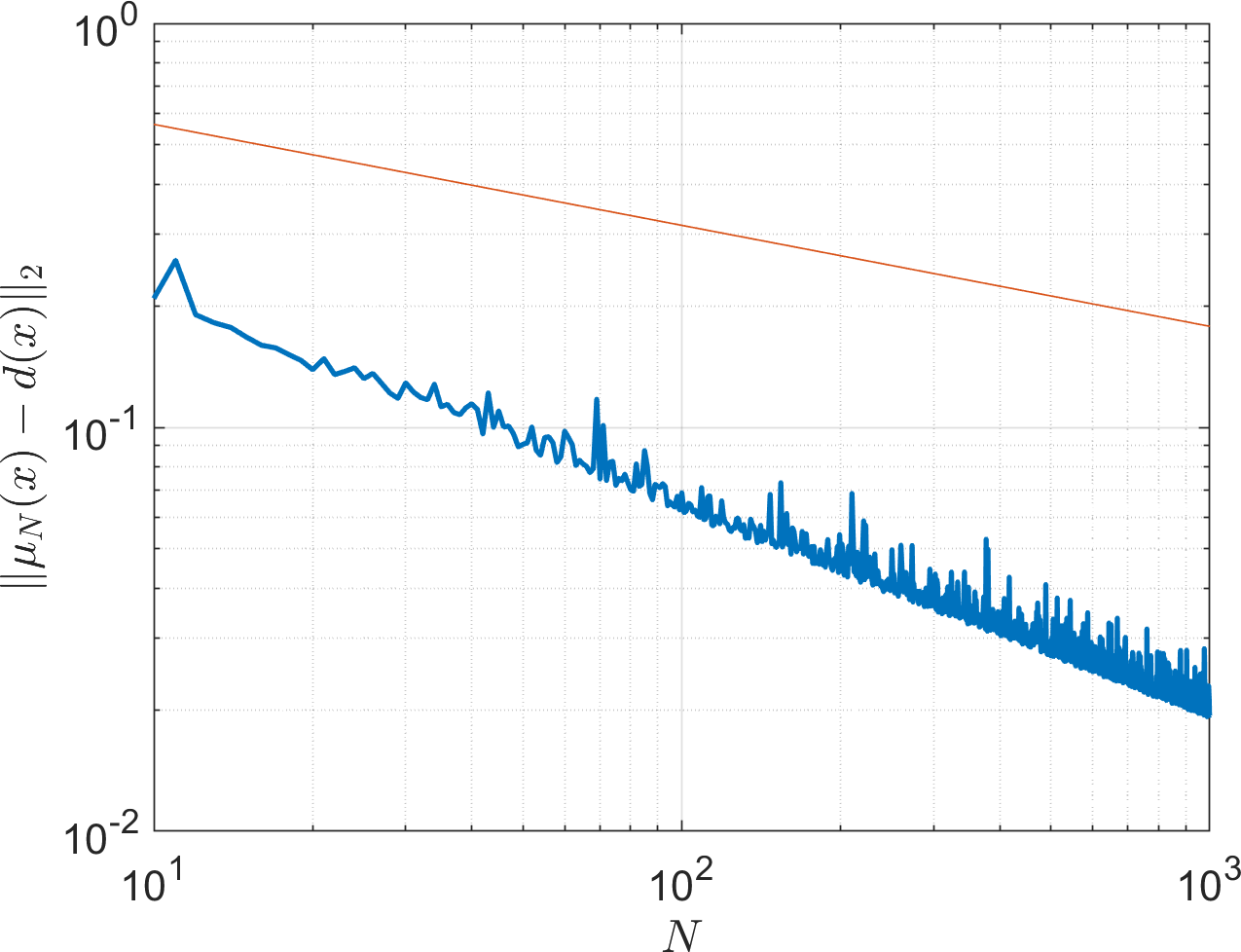} 
\caption{$\Vert \mu_N(x)-d(x)\Vert_2$ for growing $N$. Its decay is consistent with the upper bound $O(N^{-1/4})$, depicted in red. } 
\label{fig_norm2}
\end{figure}

By solving the eigenfunction equation (i.e., $(\mathcal{L}_W\psi)(x)=\kappa \psi(x)$) we find that the operator only has the trivial eigenvalue $\kappa_1=0$, so that, by Proposition~\ref{convergence} and Remark~\ref{rem_mu2-barmu2}, a.s. $\lim \bar \mu_2= \lim \mu_2 = \delta_W$. 
Fig.~\ref{fig_diff_second} shows the difference between the second normalized eigenvalues of $G_N$ and $\bar G_N$ while Fig.~\ref{fig_second} illustrates their convergence towards $\delta_W$ as per Proposition~\ref{convergence}. We can observe that the convergence of $\mu_2$ is slower than the convergence of $\bar \mu_2$.

\begin{figure}
\centering
\includegraphics[width=75mm]{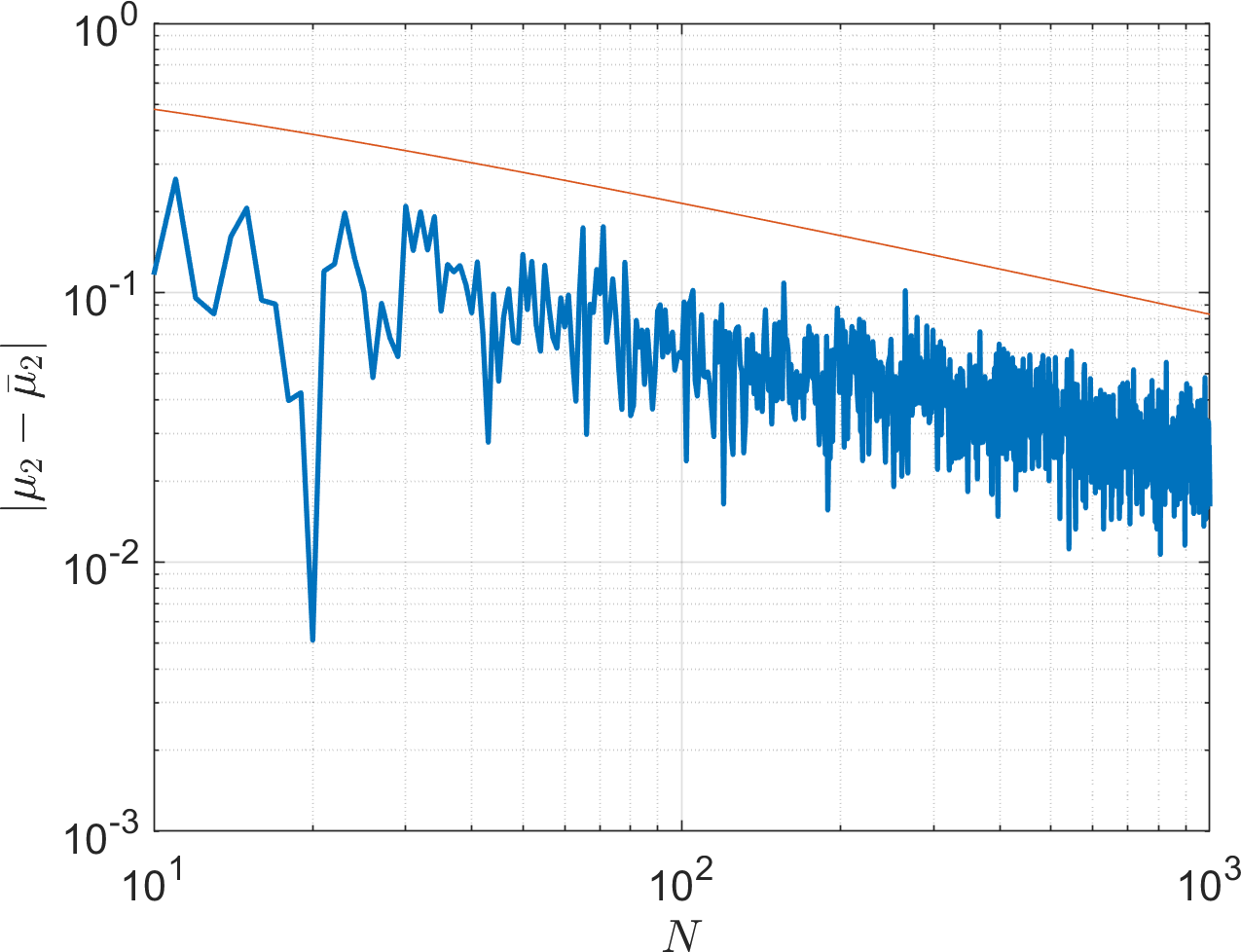} 
\caption{$\vert \mu_2-\bar\mu_2\vert$ for growing $N$. Its decay is comparable with the upper bound $(\log(N)/N)^{1/2}$, depicted in red. } 
\label{fig_diff_second}
\end{figure}

\begin{figure}
\centering
\includegraphics[width=75mm]{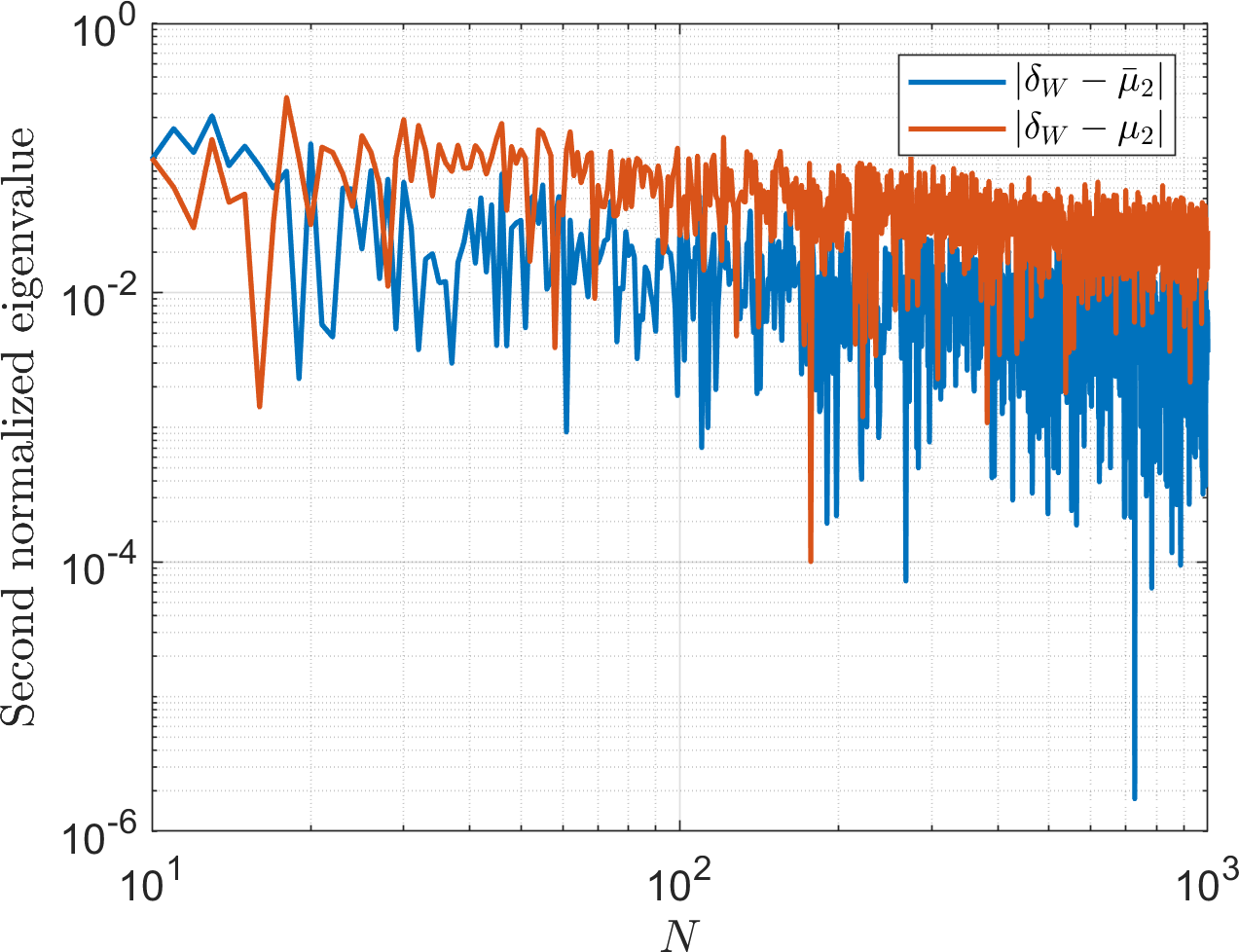} 
\caption{Convergence of second normalized eigenvalues for growing $N$.} 
\label{fig_second}
\end{figure}

The approximation of the average effective resistance is performed through:
$$
R_{W,N}^{\mathrm{ave}}=\dfrac{1}{N}\int_0^1 \dfrac{\dx}{1-0.4x}=-\dfrac{5}{2N}\log(0.6).
$$
Fig.~\ref{fig_average_relative} shows the approximation of the average effective resistance: the relative error plot suggests our bound on the convergence rate to be tight in its dependence on $N$.

\begin{figure*}
\centering
\includegraphics[width=75mm]{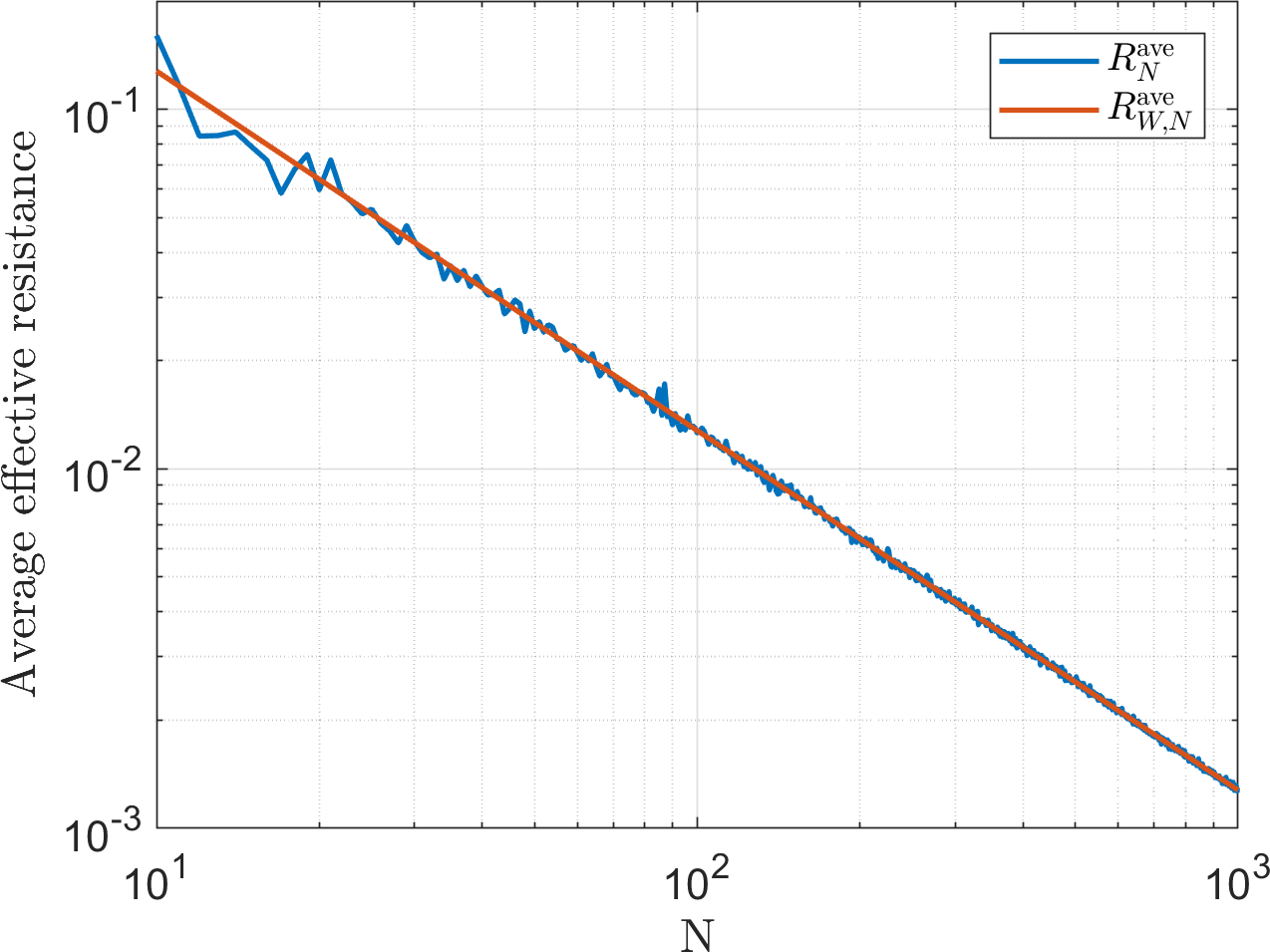} \qquad
\includegraphics[width=75mm]{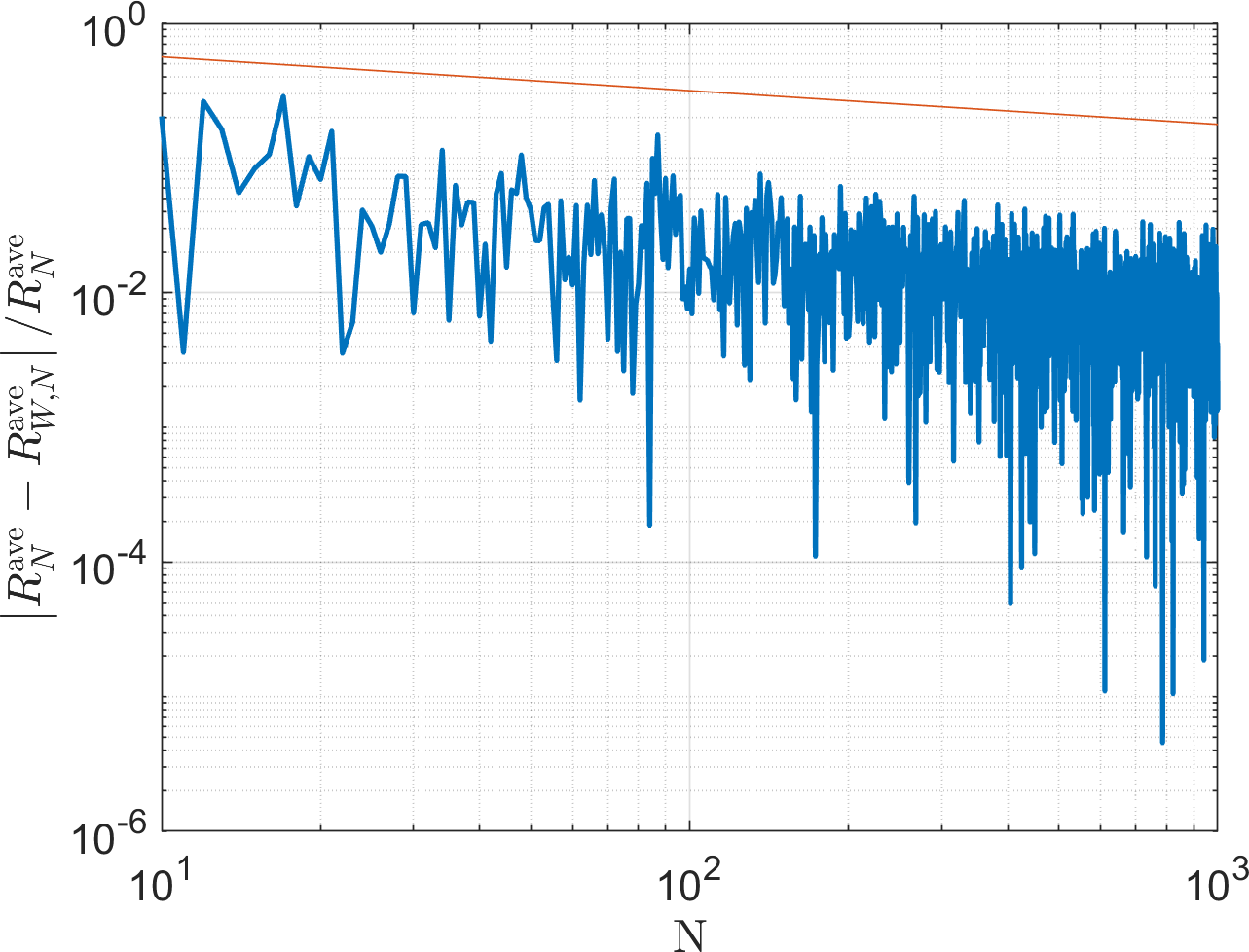} 
\caption{Left: Average effective resistance for growing $N$. Right: relative error $\left\vert R^{\mathrm{ave}}_N-R_{W,N}^{\mathrm{ave}}\right\vert/R^{\mathrm{ave}}_N$ for growing $N$. Its decay is comparable with $N^{-1/4}$, depicted in red.}
\label{fig_average_relative}
\end{figure*}

\section{Conclusion and future work}
In this paper, the spectrum of the Laplacian matrix of a network sampled from a graphon was analyzed using the degree function of the graphon. First, we showed that for networks derived from a graphon, the distribution of the eigenvalues of the Laplacian matrix is determined mainly by the degrees of the network. Then, we showed that the average effective resistance of a graph sampled from a graphon can be estimated by using the degree function of the graphon. For both problems, we have derived explicit bounds on the approximation error.

Even if this paper has shown some initial applications of the graphon Laplacian operator, numerous related questions remain open.
Indeed, our methods can be applied to estimate other functions of the Laplacian spectrum, such as the spectral zeta function \cite{karlsson2020spectral} and several performance metrics in estimation and control problems over networks \cite{garin2010survey,fagnani2018averaging}. Furthermore, the graphon Laplacian operator could be used to define suitable infinite-dimensional dynamical systems that approximate dynamical systems on finite-dimensional graphs, as done in~\cite{petit2019random} with the normalized Laplacian and in~\cite{gao2019graphon,delmas2020infinite} with the adjacency matrix. 
This line of work entails some technical difficulties, such as the lack of compactness of the graphon Laplacian operators.

Finally, we ought to recall that graphons are limited to approximate dense graphs, whereas many relevant networks are {\em sparse}. It is therefore an open question to develop the suitable tools to address these cases.

\bibliographystyle{IEEEtran}
\bibliography{ifacconf}

\begin{thebibliography}{10}
\providecommand{\url}[1]{#1}
\csname url@samestyle\endcsname
\providecommand{\newblock}{\relax}
\providecommand{\bibinfo}[2]{#2}
\providecommand{\BIBentrySTDinterwordspacing}{\spaceskip=0pt\relax}
\providecommand{\BIBentryALTinterwordstretchfactor}{4}
\providecommand{\BIBentryALTinterwordspacing}{\spaceskip=\fontdimen2\font plus
\BIBentryALTinterwordstretchfactor\fontdimen3\font minus
  \fontdimen4\font\relax}
\providecommand{\BIBforeignlanguage}[2]{{%
\expandafter\ifx\csname l@#1\endcsname\relax
\typeout{** WARNING: IEEEtran.bst: No hyphenation pattern has been}%
\typeout{** loaded for the language `#1'. Using the pattern for}%
\typeout{** the default language instead.}%
\else
\language=\csname l@#1\endcsname
\fi
#2}}
\providecommand{\BIBdecl}{\relax}
\BIBdecl

\bibitem{lovasz2006limits}
L.~Lov{\'a}sz and B.~Szegedy, ``Limits of dense graph sequences,''
  \emph{Journal of Combinatorial Theory, Series B}, vol.~96, no.~6, pp.
  933--957, 2006.

\bibitem{lovasz2012large}
L.~Lov{\'a}sz, \emph{Large networks and graph limits}.\hskip 1em plus 0.5em
  minus 0.4em\relax American Mathematical Soc., 2012, vol.~60.

\bibitem{janson2013graphons}
S.~Janson, ``Graphons, cut norm and distance, couplings and rearrangements,''
  \emph{{New York J. Math. Monographs}}, vol.~4, pp. 1--76, 2013.

\bibitem{parise2019graphon}
F.~Parise and A.~Ozdaglar, ``Graphon games,'' in \emph{Proceedings of the 2019
  ACM Conference on Economics and Computation}.\hskip 1em plus 0.5em minus
  0.4em\relax ACM, 2019, pp. 457--458.

\bibitem{gao2020lqg}
S.~Gao, P.~E. Caines, and M.~Huang, ``{LQG} graphon mean field games,''
  \emph{arXiv preprint arXiv:2004.00679}, 2020.

\bibitem{ruiz2019graphon}
L.~Ruiz, L.~F. Chamon, and A.~Ribeiro, ``The graphon {Fourier} transform,''
  \emph{arXiv preprint arXiv:1910.10195}, 2019.

\bibitem{ruiz2020graphon}
------, ``Graphon signal processing,'' \emph{arXiv preprint arXiv:2003.05030},
  2020.

\bibitem{gao2019optimal}
S.~Gao and P.~E. Caines, ``Optimal and approximate solutions to linear
  quadratic regulation of a class of graphon dynamical systems,'' in \emph{IEEE
  Conference on Decision and Control}, 2019, pp. 8359--8365.

\bibitem{gao2019graphon}
------, ``Graphon control of large-scale networks of linear systems,''
  \emph{IEEE Transactions on Automatic Control}, vol.~65, no.~10, pp.
  4090--4105, 2020.

\bibitem{petit2019random}
J.~Petit, R.~Lambiotte, and T.~Carletti, ``Random walks on dense graphs and
  graphons,'' \emph{arXiv preprint arXiv:1909.11776}, 2019.

\bibitem{gao2019spectral}
S.~Gao and P.~E. Caines, ``Spectral representations of graphons in very large
  network systems control,'' in \emph{IEEE Conference on Decision and Control},
  2019, pp. 5068--5075.

\bibitem{vizuete2019graphon}
R.~{Vizuete}, P.~{Frasca}, and F.~{Garin}, ``Graphon-based sensitivity analysis
  of {SIS} epidemics,'' \emph{IEEE Control Systems Letters}, vol.~4, no.~3, pp.
  542--547, 2020.

\bibitem{bullo2019network}
\BIBentryALTinterwordspacing
F.~Bullo, \emph{Lectures on Network Systems}.\hskip 1em plus 0.5em minus
  0.4em\relax Kindle Direct Publishing, 2019, with contributions by J. Cortes,
  F. Dorfler, and S. Martinez. [Online]. Available:
  \url{http://motion.me.ucsb.edu/book-lns}
\BIBentrySTDinterwordspacing

\bibitem{von2008consistency}
U.~von Luxburg, M.~Belkin, and O.~Bousquet, ``Consistency of spectral
  clustering,'' \emph{The Annals of Statistics}, pp. 555--586, 2008.

\bibitem{mohar1993eigenvalues}
B.~Mohar and S.~Poljak, ``Eigenvalues in combinatorial optimization,'' in
  \emph{Combinatorial and graph-theoretical problems in linear algebra}.\hskip
  1em plus 0.5em minus 0.4em\relax Springer, 1993, pp. 107--151.

\bibitem{shuman2013emerging}
D.~I. Shuman, S.~K. Narang, P.~Frossard, A.~Ortega, and P.~Vandergheynst, ``The
  emerging field of signal processing on graphs: Extending high-dimensional
  data analysis to networks and other irregular domains,'' \emph{IEEE Signal
  Processing Magazine}, vol.~30, no.~3, pp. 83--98, 2013.

\bibitem{dorfler2018electrical}
F.~D{\"o}rfler, J.~W. Simpson-Porco, and F.~Bullo, ``Electrical networks and
  algebraic graph theory: Models, properties, and applications,''
  \emph{Proceedings of the IEEE}, vol. 106, no.~5, pp. 977--1005, 2018.

\bibitem{doyle2000random}
P.~G. Doyle and J.~L. Snell, \emph{Random Walks and Electric Networks}, ser.
  Carus Monographs.\hskip 1em plus 0.5em minus 0.4em\relax Mathematical
  Association of America, 1984.

\bibitem{lovasz1993random}
L.~Lov{\'a}sz, ``Random walks on graphs: A survey,'' \emph{Combinatorics, Paul
  Erdos is Eighty}, vol.~2, no.~1, pp. 1--46, 1993.

\bibitem{lovisari2013resistance}
E.~Lovisari, F.~Garin, and S.~Zampieri, ``Resistance-based performance analysis
  of the consensus algorithm over geometric graphs,'' \emph{SIAM Journal on
  Control and Optimization}, vol.~51, no.~5, pp. 3918--3945, 2013.

\bibitem{vizuete2021noise}
R.~{Vizuete}, P.~{Frasca}, and E.~{Panteley}, ``On the influence of noise in
  randomized consensus algorithms,'' \emph{IEEE Control Systems Letters},
  vol.~5, no.~3, pp. 1025--1030, 2021.

\bibitem{barooah2007estimation}
P.~Barooah and J.~P. Hespanha, ``Estimation on graphs from relative
  measurements,'' \emph{IEEE Control Systems Magazine}, vol.~27, no.~4, pp.
  57--74, 2007.

\bibitem{rossi2015average}
W.~S. Rossi, P.~Frasca, and F.~Fagnani, ``Average resistance of toroidal
  graphs,'' \emph{SIAM Journal on Control and Optimization}, vol.~53, no.~4,
  pp. 2541--2557, 2015.

\bibitem{AC-PR-WR-RS-PT:96}
A.~K. Chandra, P.~Raghavan, W.~L. Ruzzo, R.~Smolensky, and P.~Tiwari,
  ``\BIBforeignlanguage{English}{The electrical resistance of a graph captures
  its commute and cover times},''
  \emph{\BIBforeignlanguage{English}{Computational Complexity}}, vol.~6, no.~4,
  pp. 312--340, 1996.

\bibitem{saad2011numerical}
Y.~Saad, \emph{Numerical Methods for Large Eigenvalue Problems}.\hskip 1em plus
  0.5em minus 0.4em\relax Society for Industrial and Applied Mathematics, 2011.

\bibitem{vishnoi2012laplacian}
N.~K. Vishnoi, ``Laplacian solvers and their algorithmic applications,''
  \emph{Theoretical Computer Science}, vol.~8, no. 1-2, pp. 1--141, 2012.

\bibitem{spielman2014nearly}
D.~A. Spielman and S.-H. Teng, ``Nearly linear time algorithms for
  preconditioning and solving symmetric, diagonally dominant linear systems,''
  \emph{SIAM Journal on Matrix Analysis and Applications}, vol.~35, no.~3, pp.
  835--885, 2014.

\bibitem{kyng2016approximate}
R.~Kyng and S.~Sachdeva, ``Approximate {G}aussian elimination for
  {L}aplacians-fast, sparse, and simple,'' in \emph{2016 IEEE 57th Annual
  Symposium on Foundations of Computer Science (FOCS)}.\hskip 1em plus 0.5em
  minus 0.4em\relax IEEE, 2016, pp. 573--582.

\bibitem{luxburg2005limits}
U.~von Luxburg, O.~Bousquet, and M.~Belkin, ``Limits of spectral clustering,''
  in \emph{Advances in neural information processing systems}, 2005, pp.
  857--864.

\bibitem{zhan2010distributions}
C.~Zhan, G.~Chen, and L.~F. Yeung, ``On the distributions of {L}aplacian
  eigenvalues versus node degrees in complex networks,'' \emph{Physica A:
  Statistical Mechanics and its Applications}, vol. 389, no.~8, pp. 1779--1788,
  2010.

\bibitem{hata2017localization}
S.~Hata and H.~Nakao, ``Localization of {L}aplacian eigenvectors on random
  networks,'' \emph{Scientific reports}, vol.~7, no.~1, p. 1121, 2017.

\bibitem{avella2018centrality}
M.~{Avella-Medina}, F.~{Parise}, M.~T. {Schaub}, and S.~{Segarra}, ``Centrality
  measures for graphons: Accounting for uncertainty in networks,'' \emph{IEEE
  Transactions on Network Science and Engineering}, vol.~7, no.~1, pp.
  520--537, 2020.

\bibitem{horn2012matrix}
R.~A. Horn and C.~R. Johnson, \emph{Matrix analysis}.\hskip 1em plus 0.5em
  minus 0.4em\relax Cambridge university press, 2012.

\bibitem{chung2011spectra}
F.~Chung and M.~Radcliffe, ``On the spectra of general random graphs,''
  \emph{Electronic Journal of Combinatorics}, vol.~18, no.~1, p. 215, 2011.

\bibitem{PD-DS:91}
P.~Diaconis and D.~Stroock, ``Geometric bounds for eigenvalues of {M}arkov
  chains,'' \emph{Annals of Applied Probability}, vol.~1, no.~1, pp. 36--61,
  1991.

\bibitem{FRC:06}
F.~Chung, ``\BIBforeignlanguage{{English}}{The diameter and {L}aplacian
  eigenvalues of directed graphs},''
  \emph{\BIBforeignlanguage{{English}}{{Electronic Journal of Combinatorics}}},
  vol.~{13}, no.~{1}, {2006}.

\bibitem{fagnani2018averaging}
F.~Fagnani and P.~Frasca, ``Averaging with exogenous inputs and electrical
  networks,'' in \emph{Introduction to Averaging Dynamics over Networks}.\hskip
  1em plus 0.5em minus 0.4em\relax Springer, 2018, pp. 109--131.

\bibitem{karlsson2020spectral}
A.~Karlsson, ``Spectral zeta functions,'' in \emph{Discrete and Continuous
  Models in the Theory of Networks}.\hskip 1em plus 0.5em minus 0.4em\relax
  Cham: Springer International Publishing, 2020, pp. 199--211.

\bibitem{garin2010survey}
F.~Garin and L.~Schenato, ``A survey on distributed estimation and control
  applications using linear consensus algorithms,'' in \emph{Networked control
  systems}.\hskip 1em plus 0.5em minus 0.4em\relax Springer, 2010, pp. 75--107.

\bibitem{delmas2020infinite}
J.-F. Delmas, D.~Dronnier, and P.-A. Zitt, ``An infinite-dimensional {SIS}
  model,'' \emph{arXiv preprint arXiv:2006.08241}, 2020.

\end{thebibliography}

\begin{IEEEbiography}[{\includegraphics[width=1in,height=1.25in,clip,keepaspectratio]{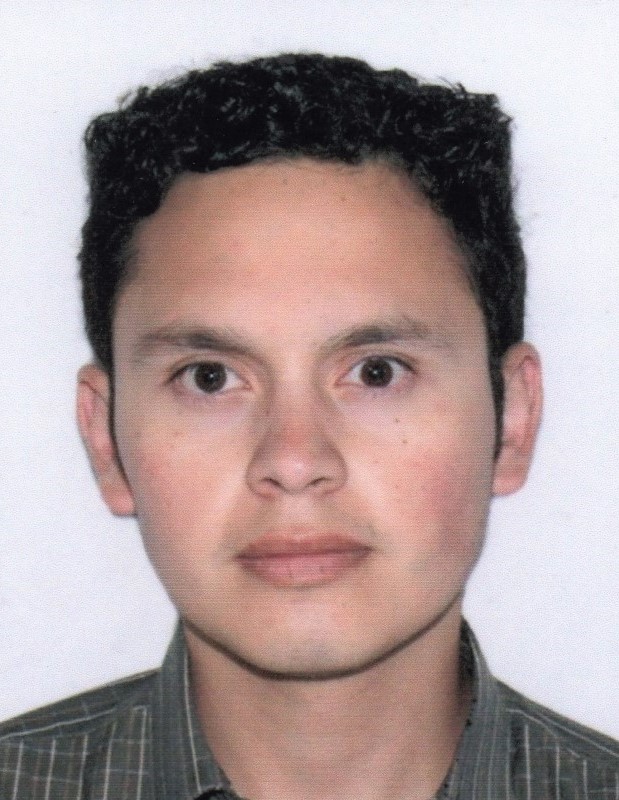}}]{Renato Vizuete}
received the B.S. degree (summa cum laude) in Electronics and Control Engineering from Escuela
Polit\'ecnica Nacional, Ecuador and the M.S. degree (tr\`es bien) in Systems, Control and Information Technologies
from Universit\'e Grenoble Alpes, France. He was the recipient of the Persyval-Lab Excellence Master Scholarship
from Universit\'e Grenoble Alpes in 2018. He is currently a PhD student at L2S CentraleSup\'elec and GIPSA-lab, France. His research interests include: control theory,
multi-agent systems, hybrid systems and networked control systems.
\end{IEEEbiography}

\begin{IEEEbiography}[{\includegraphics[width=1in,height=1.25in,clip,keepaspectratio]{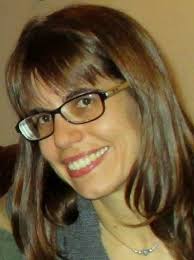}}]{Federica Garin}
(M’16) is  a  researcher  with  the NeCS  team  at  INRIA  and  GIPSA-lab,  Grenoble (France).  She  received  her  B.S.,  M.S.,  and  Ph.D. degrees in Applied Mathematics from Politecnico di Torino (Italy) in 2002, 2004, and 2008, respectively. She  was  a  post-doctoral  researcher  at  Universit\`a  di Padova  (Italy)  in  2008  and  2009,  and  at  INRIA Grenoble (France) in 2010.  She  is  an  Associate  Editor  in  the  IEEE-CSS Conference  Editorial  Board  and  in  the  European Control  Association  (EUCA)  Conference  Editorial Board. Her current research interests are in distributed algorithms and network control systems.
\end{IEEEbiography}

\begin{IEEEbiography}[{\includegraphics[width=1in,height=1.25in,clip,keepaspectratio]{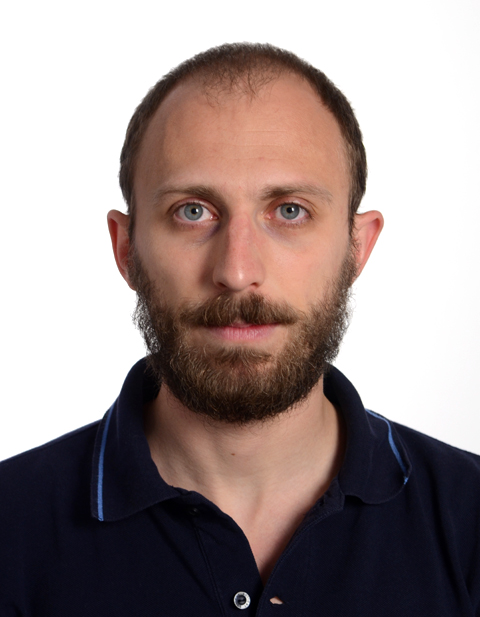}}]{Paolo Frasca}
(M'13, SM'18) received the Ph.D.\ degree in Mathematics for Engineering Sciences from Politecnico di Torino, Torino, Italy, in 2009. 
From 2013 to 2016, he was an Assistant Professor at the University of Twente in Enschede, the Netherlands. Since October 2016 he is a CNRS Researcher affiliated with GIPSA-lab, Grenoble, France. 
His research interests are in the theory of networks and control systems, with main applications to transportation and social networks. \end{IEEEbiography}

\end{document}